\newif\iffigure
\newtheorem{theorem}{Theorem}[section]
\newtheorem{method}[theorem]{Method}
\newtheorem{lemma}[theorem]{Lemma}
\newtheorem{remark}[theorem]{Remark}
\newtheorem{corollary}[theorem]{Corollary}
\newenvironment{acases}{\left\{\begin{aligned}}{\end{aligned}\right.}
\def\Real{\mbox{$\mathbb{R}$}}
\def\0{\boldsymbol{0}}
\def\1{\boldsymbol{1}}
\def\2{\boldsymbol{2}}
\def\3{\boldsymbol{3}}
\def\4{\boldsymbol{4}}
\def\5{\boldsymbol{5}}
\def\6{\boldsymbol{6}}
\def\7{\boldsymbol{7}}
\def\8{\boldsymbol{8}}
\def\9{\boldsymbol{9}}
\def\a{\boldsymbol{a}}
\def\b{\boldsymbol{b}}
\def\l{\boldsymbol{l}}
\def\q{\boldsymbol{q}}
\def\t{\boldsymbol{t}}
\def\u{\boldsymbol{u}}
\def\v{\boldsymbol{v}}
\def\w{\boldsymbol{w}}
\def\x{\boldsymbol{x}}
\def\A{\boldsymbol{A}}
\def\D{\boldsymbol{D}}
\def\E{\boldsymbol{E}}
\def\I{\boldsymbol{I}}
\def\J{\boldsymbol{J}}
\def\L{\boldsymbol{L}}
\def\M{\boldsymbol{M}}
\def\N{\boldsymbol{N}}
\def\O{\boldsymbol{O}}
\def\U{\boldsymbol{U}}
\def\X{\boldsymbol{X}}
\newcommand{\futoji}[1]{\boldsymbol{#1}}
\def\flambda{\futoji{\lambda}}
\def\fGamma{\futoji{\Gamma}}
\def\fOmega{\futoji{\Omega}}
\def\fomega{\futoji{\omega}}
\begin{document}

\title{Sparsity Exploitation of Accelerated Modulus-Based Gauss-Seidel Method for Interactive Rigid Body Simulations}

\author{
	Shugo Miyamoto
		\thanks{Department of Systems Innovation, School of Engineering, The University of Tokyo. Email:miyamoto-s@g.ecc.u-tokyo.ac.jp.}, and
	Makoto Yamashita
		\thanks{Department of  Mathematical and Computing Science, Tokyo Institute of Technology. Email: Makoto.Yamashita@c.titech.ac.jp.}
}
\date{October 22, 2019}

\maketitle

\begin{abstract}
Large-scale linear complementarity problems (LCPs) are repeatedly solved in interactive rigid-body simulations.
The projected Gauss-Seidel method is often employed for LCPs, since it has advantages in computation time, numerical robustness, and memory use.
Zheng and Yin (2013) proposed modulus-based matrix splitting iteration methods and showed their effectiveness for large problems, but a simple application of their approach to large-scale LCPs in interactive rigid-body simulations is not effective since such a simple application demands large matrix multiplications.

In this paper, we propose a novel method derived from accelerated modulus-based matrix splitting iteration methods that fits LCPs arising in interactive rigid-body simulations.
To improve the computation time, we exploit sparsity structures related to the generalized velocity vector of rigid bodies.
We discuss the convergence of the proposed method for an important case that the coefficient matrix of a given LCP is a positive definite matrix.
Numerical experiments show that the proposed method is more efficient than the simple application of the accelerated modulus-based Gauss-Seidel method and that the accuracy in each step of the proposed method is superior to that of the projected Gauss-Seidel method.

\end{abstract}

{\bf Keywords:} Iterative methods for linear systems, Dynamics of multibody systems

\section{Introduction}
\label{intro}
In rigid-body simulations, interactions (for example, normal forces) between rigid bodies are often mathematically modeled as constraints.
For computing these constraint forces, we usually need to solve certain equations.
% Solving constraints demands the highest computational load in rigid-body simulations, but it is also considered to be the most important task there.
Two representative constraint formulations for rigid-body simulations are
acceleration-based formulations \cite{baraff1993non} and velocity-based formulations \cite{anitescu1997formulating}.
In the acceleration-based formulations, the constraints are described with forces and accelerations of rigid bodies; we first compute forces and accelerations, then integrate them to obtain velocity changes.
On the other hand, in the velocity-based formulations, the variables in the constraints are impulses and velocities of the rigid bodies.
In this paper,
we focus on the velocity-based formulations, since the velocity-based formulations are widely used and are known to be superior to the acceleration-based formulations in many aspects (for example, see \cite{erleben2004stable, poulsen2010heuristic}).

There are two main categories for solving constraints,
iterative approaches and direct approaches.
We focus on the iterative approaches rather than the direct approaches, since the direct approaches such as pivoting methods often suffer from time complexity and numerical instability as pointed in~\cite{nakaoka2007constraint}.
In the impulse-based iterative approaches \cite{erleben2004stable}, impulses are applied to the rigid bodies sequentially, until certain convergence conditions are satisfied.

Contact constraints are frequently modeled in a form of complementarity problems \cite{baraff1994fast}; in particular,  linear complementarity problems (LCPs) give mathematical formulations for frictionless contacts.
Among various iterative methods for solving LCPs, the projected Gauss-Seidel (PGS) method \cite{bender2014interactive} has a remarkable flexibility, therefore the PGS method or its extensions are often employed to solve contact constraints \cite{erleben2004stable, poulsen2010heuristic, nakaoka2007constraint}.
Another iterative approach for solving LCPs is the use of modulus-based methods.
Bai~\cite{bai2010modulus} established modulus-based matrix splitting iteration (MMSI) methods, which include modulus-based Jacobi (MJ), modulus-based Gauss-Seidel (MGS), modulus-based successive over relaxation (MSOR), and modulus-based accelerated overrelaxation (MAOR) iteration methods as special cases.
Recently, Zheng and Yin \cite{zheng2013accelerated} proposed an accelerated modulus-based matrix splitting iteration (AMMSI) method as an improvement of Bai~\cite{bai2010modulus}.
In a similar way to the MMSI methods, the AMMSI methods include accelerated modulus-based Jacobi (AMJ), accelerated modulus-based Gauss-Seidel (AMGS), accelerated modulus-based SOR (AMSOR), and accelerated modulus-based accelerated overrelaxation (AMAOR) iteration methods.
To our best knowledge, no application of the MMSI or AMMSI methods to real-time simulations has been examined before.

Since the AMGS method is not designed for interactive rigid-body simulations, a simple application of the AMGS method causes inefficiency in computation and it is a serious disadvantage for real-time simulations.
In many applications of real-time simulations, interactive computer graphics  and operations are considered most important. If the computation in each simulation step is considerably slower than real time, the quality of the users' experience would be seriously degraded.

In this paper, we resolve this difficulty by focusing the update formula in the AMGS method and exploiting the structures related to the generalized velocity vector of rigid bodies.

We also give a theoretical proof on the convergence of the AMGS method. Bai~\cite{bai2010modulus} already discussed the convergence,
but the assumption in \cite{bai2010modulus} is too restrictive
to apply the same discussion to rigid-body simulations.
We extend the proof of \cite{bai2010modulus} to cover
an important case that the coefficient matrix in the LCP is a positive definite matrix.

Through numerical experiments, we observed that the proposed AMGS method that exploited the sparsity attained shorter computation time than the original AMGS method. Furthermore its convergence rate in each iteration was better than that of the PGS method. These results indicate that the proposed method is useful for practical real-time simulations.

The outline of this paper is as follows.
In Section~\ref{sec:pre}, we briefly introduce a formulation of velocity-based constraints as an LCP. We also discuss the projected Gauss-Seidel method and the AMGS method to solve LCPs.
The application of the AMGS method to rigid-body simulations is developed in Section~\ref{sec:amgs4rbs}, and we prove convergence theorems of the AMGS method in Section~\ref{sec:convthm}.
In Section~\ref{sec:numexp}, we will show numerical results to verify the efficiency of the AMGS method.
Finally, we will give a conclusion in Section~\ref{sec:conclusion}.

\section{Preliminaries}\label{sec:pre}

\subsection{Linear complementarity problem with velocity-based constraints}
For the latter discussions, we briefly introduce an LCP that arises from velocity-based constraints. % , which is the main problem of this paper.
For more details, the readers can refer to \cite{baraff1994fast, stewart1996implicit, tonge2012mass}.

During a rigid-body simulation, we keep tracking movements of the rigid bodies
in multiple time periods, therefore,
an entire simulation is divided into a sequence of simulation steps, and each simulation step corresponds to a small time step.
Since the constraints on the rigid bodies should be satisfied at each time,
we solve the following LCP  in each simulation step:
\begin{align}
\left\{\begin{array}{rcl}
\flambda & \ge & \0 \\
\J\M^{-1}\J^T \flambda +\J \v + \b &\geq& \0 \\
(\J\M^{-1}\J^T \flambda + \J \v + \b)^T \flambda &=& 0
\end{array}\right. \label{eq:LCP}
\end{align}
In this paper, we use the superscript $T$ to denote the transpose of a vector or a matrix.

The decision variable in this LCP is
$\flambda\in\mathbb{R}^m$ which is the impulse vector applied to the rigid bodies in the constraint space. % , where $\Real_+^m = \{ \flambda \in \Real^m | \flambda \ge \0\} $ is the nonnegative orthant in $\Real^m$.
The first constraint in \eqref{eq:LCP} requires $\flambda$ to be nonnegative to ensure that the constraint impulse must be repulsive.

The second constraint in \eqref{eq:LCP} corresponds to the velocity constraints in the rigid-body simulation.
The vectors $\b\in\mathbb{R}^m$ and $\v \in \mathbb{R}^n$
are the bias vector in a constraint space and
the generalized velocity vector of rigid bodies, respectively.
More precisely,
when we have $N$ rigid bodies, $\v$ is a vector that consists of $N$ linear and angular velocities, \textit{i.e.},
\begin{align*}
\v=\begin{pmatrix}
\v_1^T & \fomega_1^T & \v_2^T & \fomega_2^T & \cdots & \v_N^T & \fomega_N^T
\end{pmatrix}^T
\end{align*}
where $\v_i\in\mathbb R^3$ and $\fomega_i\in\mathbb R^3$ are the linear velocity and the angular velocity of the $i$th rigid body, respectively, for $i=1,\ldots,N$;
thus the length of $\v$ is $n = 6 N$.
The matrix $\J \in \Real^{m \times n}$ is the Jacobian matrix corresponding to the velocity constraints.
The generalized mass matrix of the rigid bodies
$\M\in\mathbb R^{n\times n}$ consists of masses and inertia tensor matrices in the diagonal positions:
\begin{align}
\M=\begin{pmatrix}
m_1\E_3 &     &      &     &         &      &     \\
& \I_1 &      &     &         &      &     \\
&     & m_2\E_3 &     &         &      &     \\
&     &      & \I_2 &         &      &     \\
&     &      &     & \ddots  &      &     \\
&     &      &     &         & m_N\E_3 &     \\
&     &      &     &         &      & \I_N \\
\end{pmatrix}, \label{eq:structure-M}
\end{align}
where $m_i\in\mathbb{R}$ and $\I_i\in\mathbb R^{3\times3}$ are the mass and the inertia tensor matrix of the $i$th rigid body, respectively.
We also use $\E_r \in\mathbb R^{r\times r}$ to denote the identity matrix of order $r$.
The inertia tensor matricies $\I_1, \ldots, \I_m$ are symmetric, so is $\M$.

The third constraint in \eqref{eq:LCP} is a complementarity condition.
We can understand  this complementarity condition as follows.
% First, since the constraint impulse must be repulsive, $\lambda$ should be nonnegative.
If $(\J\M^{-1}\J^T\flambda + \J\v+\b)_i>0$ holds for some $i$, then
the rigid bodies are moving away from each other in the direction of the $i$th constraint, therefore,
the $i$th constraint should be ``inactive''.
However, $\flambda$ is the impulse vector, thus $\lambda_i>0$ implies that the $i$th constraint must be ``active''.
Hence, $\lambda_i>0$ and $(\J\M^{-1}\J^T\flambda + \J\v+\b)_i>0$ should not hold simultaneously,
and this requirement is implemented in the complementarity condition.

By denoting $\q = \J\v+\b$  and $\A = \J\M^{-1}\J^T$ and introducing an auxiliary variable $\w \in \Real^m$, the LCP \eqref{eq:LCP} can be expressed in  a general LCP as follows:
\begin{align}
\label{eq:lcp1}
LCP(\q, \A)
\quad
\left\{\begin{array}{rcl}
\A\flambda+\q& = & \w\\
\w^T\flambda&=& 0 \\
\flambda, \w &\geq & \0
\end{array}\right.
\end{align}
%\begin{align}
%\label{eq:lcp1}
%\left\{\begin{array}{rcl}
%\flambda &\geq & \0 \\
%\A\flambda+\q& \ge & \0 \\
%(\A\flambda+\q)^T\flambda&=& 0 \\
%\flambda, \w &\geq & \0
%\end{array}\right.
%\end{align}
It is known that if the constraints are non-degenerate, the Jacobian matrix $\J$ is full row rank and the matrix $\A=\J\M^{-1}\J^T$ is positive definite
(see \cite{bender2014interactive}, for example).
Throughout this paper, we assume that $\A$ is positive definite.

At the end of this subsection, we should note that
the input data $\b, \v, \J$ and $\M$ vary in accordance with simulation steps.
If we express the time dependence explicitly, they should be
$\b^{(t)}, \v^{(t)}, \J^{(t)}$ and $\M^{(t)}$ where $t$ is the simulation step.
However, in this paper, we mainly focus on solving \eqref{eq:LCP} in each simulation step,
therefore, we usually drop the simulation step $(t)$ from $\b^{(t)}, \v^{(t)}, \J^{(t)}$ and $\M^{(t)}$.

\subsection{Projected Gauss-Seidel method}

In the LCP \eqref{eq:lcp1} from the rigid-body simulation,
the matrix $\A$ has a structure such that $\A = \J\M^{-1}\J^T$.
The projected Gauss-Seidel (PGS) method~\cite{erleben2004stable} is
designed to solve more general LCPs \eqref{eq:lcp1} in the sense that
the assumption for $\A$ is only positive definiteness.
The PGS method is an iterative method and generates a sequence
$\left\{\flambda\right\}_{k=0}^{\infty} \subset \Real^m$.
% we use $\flambda^k$ to denote the $k$th iterate.

A key property in the PGS method is to decompose $\A$ into $\A=\D-\L-\U$ such that $\D$ is a diagonal matrix, $\L$ a strictly lower triangular matrix, and $\U$ a strictly upper triangular matrix. Since we assume $\A$ is a positive definite matrix, $\D$ is invertible. Due to this decomposition,
$\A \flambda + \q = \0$ is equivalent to
$\flambda = \D^{-1} (\L \flambda + \U \flambda - \q).$

Taking this formula and the complementarity condition into consideration, the PGS method computes the next iteration $\flambda^{k+1}$ by the following update formula:
\begin{align}
\label{eq:pgs-base}
\flambda^{k+1}=\max\left\{\0, \D^{-1}\left(\L\flambda^{k+1}+\U\flambda^k-\q\right)\right\}.
\end{align}
Throughout this paper, we use $\max\left\{\a, \b\right\}$ ($\min\left\{\a, \b\right\}$) to denote
the element-wise maximum (minimum, respectively) of two vectors $\a$ and $\b$.
The PGS method continues the update by \eqref{eq:pgs-base}
until the sequence $\left\{\flambda^k\right\}_{k=0}^\infty$ converges enough, or the number of the iterations reaches a certain limit.

In the rigid-body simulation, the initial vector $\flambda^0$ is usually set as a zero vector $\0$ or the impulse vector obtained in the previous simulation step.
Since the initial vector often affects the performance of iterative approaches, the use of the solution from the previous simulation step makes the convergence faster~\cite{erleben2004stable}. Such a technique is called warm start.

\subsection{Accelerated modulus-based Gauss-Seidel method}
\label{sec:amgs}

For solving the general LCP \eqref{eq:lcp1},
Bai \cite{bai2010modulus} devised the following implicit fixed-point equation that is essential for the modulus-based matrix splitting iteration (MMSI) methods:
\begin{align}
\label{eq:fixed-point1}
(\M_0\fGamma+\fOmega_1)\x=(\N_0\fGamma-\fOmega_2)\x+(\fOmega-\A\fGamma)|\x|-\q
\end{align}
Here, the matrices $\M_0 \in \Real^{m \times m}$ and $\N_0 \in \Real^{m \times m}$
are a splitting pair of $\A$ such that $\A = \M_0 - \N_0$.
$\fOmega \in \Real^{m \times m}$ and $\fGamma \in \Real^{m \times m}$ are
two diagonal matrices whose diagonal entries are positive.
$\fOmega_1 \in \Real^{m\times m}$ and $\fOmega_2 \in \Real^{m\times m}$ are nonnegative diagonal matrices such that $\fOmega=\fOmega_1+\fOmega_2$.
We should emphasize that the variable in \eqref{eq:fixed-point1} is $\x$, and
we use $|\x|$ to denote the element-wise absolute values of $\x$.
The relation
between $\x$ and the pair of $\flambda$ and $\w$ in \eqref{eq:lcp1} will be discussed in
Theorem~\ref{thm:fixedpoint}.

By setting $\fOmega_1=\fOmega$, $\fOmega_2=\O$, and $\fGamma=\frac1\gamma \E_m$ with a parameter $\gamma>0$ in \eqref{eq:fixed-point1},
we can obtain a simplified implicit fixed-point equation:
\begin{align}
\label{eq:fixed-point1-2}
(\M_0+\gamma \fOmega)\x=\N_0 \x+(\gamma \fOmega-\A)|\x|-\gamma \q.
\end{align}
Based on \eqref{eq:fixed-point1-2}, the iteration of the MMSI method can be derived as follows:
\begin{align}
(\M_0+\gamma \fOmega)\x^{k+1}=\N_0 \x^k+(\gamma \fOmega-\A)|\x^k|-\gamma \q.
\label{eq:MMSIupdate}
\end{align}

We decompose $\A$ into $\A=\D-\L-\U$ in the same way as the PGS method.
Set $\gamma=2$, and let $\alpha >0$ and $\beta > 0$ be two parameters.
Then, we can derive the update formula of four methods from \eqref{eq:MMSIupdate}; the modulus-based Jacobi (MJ) by setting $\M_0=\D$ in \eqref{eq:MMSIupdate}, the modulus-based Gauss-Seidel (MGS) by $\M_0=\D-\L$, the modulus-based successive over relaxation (MSOR) by $\M_0=\frac1\alpha \D-\L$, and the modulus-based accelerated overrelaxation (MAOR) iteration method by $\M_0=\frac1\alpha(\D-\beta \L)$, respectively.

Zheng and Yin~\cite{zheng2013accelerated}
utilized two splitting pairs of the matrix $\A$ such that $\A=\M_1-\N_1=\M_2-\N_2$,
and
devised a new equation based on \eqref{eq:fixed-point1}:
\begin{align}
\label{eq:fixed-point2}
(\M_1\fGamma+\fOmega_1)\x=(\N_1\fGamma-\fOmega_2)\x+(\fOmega-\M_2\fGamma)|\x|+\N_2\fGamma|\x|-\q.
\end{align}

Zheng and Yin~\cite{zheng2013accelerated}  established the following theorem to show an equivalence between \eqref{eq:fixed-point2} and
$LCP(\q,\A)$ in \eqref{eq:lcp1}.
Since a detailed proof is not given in \cite{zheng2013accelerated}, we give the proof here.
\begin{theorem}\upshape\cite{zheng2013accelerated}
\label{thm:fixedpoint}
The following statements hold between \eqref{eq:fixed-point2} and
$LCP(\q,\A)$:
\begin{enumerate}[(i)]
  \item if $(\flambda,\w)$ is a solution of $LCP(\q,\A)$, then $\x=\frac12(\fGamma^{-1}\flambda-\fOmega^{-1}\w)$ satisfies
  \eqref{eq:fixed-point2}.
  \item if $\x$ satisfies \eqref{eq:fixed-point2}, then
  the pair of $\flambda=\fGamma(|\x|+\x)$ and $\w=\fOmega(|\x|-\x)$ is a solution of $LCP(\q,\A)$.
\end{enumerate}
\end{theorem}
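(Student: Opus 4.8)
The plan is to reduce both implications to the single algebraic identity
\begin{align*}
(\A\fGamma+\fOmega)\x=(\fOmega-\A\fGamma)|\x|-\q,
\end{align*}
which I will call $(\star)$, using repeatedly the elementary scalar facts that, for every $a\in\Real$, one has $|a|+a=2\max\{0,a\}\ge0$, $|a|-a=2\max\{0,-a\}\ge0$, and $(|a|+a)(|a|-a)=0$. The first observation is that $(\star)$ is nothing but a rewriting of \eqref{eq:fixed-point2}: substituting $\A=\M_2-\N_2$ turns the coefficient $\fOmega-\A\fGamma$ on the right-hand side of $(\star)$ into $\fOmega-\M_2\fGamma+\N_2\fGamma$, and substituting $\A=\M_1-\N_1$ together with $\fOmega=\fOmega_1+\fOmega_2$ splits the left-hand side $(\A\fGamma+\fOmega)\x$ into $(\M_1\fGamma+\fOmega_1)\x-(\N_1\fGamma-\fOmega_2)\x$; moving the latter term across reproduces \eqref{eq:fixed-point2} verbatim, and every step is reversible, so $(\star)$ and \eqref{eq:fixed-point2} are equivalent.

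Next I would establish the ``decoding'' step, which already yields statement (ii). For an arbitrary $\x\in\Real^m$, set $\flambda=\fGamma(|\x|+\x)$ and $\w=\fOmega(|\x|-\x)$. Since $\fGamma$ and $\fOmega$ are positive diagonal matrices and $|\x|\pm\x\ge\0$ componentwise, we get $\flambda\ge\0$ and $\w\ge\0$ for free; moreover $\w^T\flambda=\sum_i(\fGamma)_{ii}(\fOmega)_{ii}(|x_i|+x_i)(|x_i|-x_i)=0$, so the complementarity condition holds automatically. Hence the only substantive content of ``$(\flambda,\w)$ solves $LCP(\q,\A)$'' is the linear equation $\A\flambda+\q=\w$. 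Expanding $\flambda=\fGamma|\x|+\fGamma\x$ and $\w=\fOmega|\x|-\fOmega\x$ and collecting the $\x$- and $|\x|$-terms shows that $\A\flambda+\q=\w$ is literally $(\star)$. Combined with the previous paragraph, $\x$ satisfies \eqref{eq:fixed-point2} if and only if this $(\flambda,\w)$ solves $LCP(\q,\A)$; this proves (ii).

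For statement (i), start from an LCP solution $(\flambda,\w)$ and put $\x=\frac12(\fGamma^{-1}\flambda-\fOmega^{-1}\w)$, which is well defined since $\fGamma$ and $\fOmega$ are invertible. The point is that this $\x$ is mapped back to exactly $(\flambda,\w)$ by the decoding map of the previous paragraph; once that is known, $\A\flambda+\q=\w$ is precisely $(\star)$, so \eqref{eq:fixed-point2} holds by the first paragraph and (i) follows. To verify the claim I would argue componentwise: since $\flambda,\w\ge\0$ and $\w^T\flambda=0$ force $\lambda_i w_i=0$ for every $i$, either $w_i=0$ or $\lambda_i=0$. If $w_i=0$ then $x_i=\frac12(\fGamma^{-1})_{ii}\lambda_i\ge0$, so $|x_i|+x_i=(\fGamma^{-1})_{ii}\lambda_i$ and $|x_i|-x_i=0$; if $\lambda_i=0$ the symmetric computation gives $|x_i|+x_i=0$ and $|x_i|-x_i=(\fOmega^{-1})_{ii}w_i$. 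In both cases $(\fGamma)_{ii}(|x_i|+x_i)=\lambda_i$ and $(\fOmega)_{ii}(|x_i|-x_i)=w_i$, that is, $\fGamma(|\x|+\x)=\flambda$ and $\fOmega(|\x|-\x)=\w$.

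I do not anticipate a genuine obstacle: the argument is essentially bookkeeping organized around $(\star)$. The one spot that deserves a little care is the componentwise case analysis in statement (i), because that is the only place where the complementarity hypothesis of the LCP is actually consumed (in the decoding step of (ii) it was automatic), and one should make sure the overlapping case $\lambda_i=w_i=0$ is covered — it is, since then $x_i=0$ and both displayed identities reduce to $0=0$.
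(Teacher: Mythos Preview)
Your proof is correct and follows essentially the same approach as the paper: both reduce \eqref{eq:fixed-point2} to the single identity $(\A\fGamma+\fOmega)\x=(\fOmega-\A\fGamma)|\x|-\q$ (the paper's \eqref{eq:fixed-point2-proof1}, your $(\star)$), and both exploit complementarity to identify $|\x|=\frac12(\fGamma^{-1}\flambda+\fOmega^{-1}\w)$ when $\x=\frac12(\fGamma^{-1}\flambda-\fOmega^{-1}\w)$. The only cosmetic difference is that the paper asserts this absolute-value identity directly, whereas you derive it via a componentwise case split on which of $\lambda_i,w_i$ vanishes.
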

\begin{proof}
We first prove (i).
Since $(\flambda,\w)$ is a solution of $LCP(\q,\A)$,
$(\flambda,\w)$ satisfies the four constraints,
$\A\flambda + \q = \w$,
$\w^T \flambda = 0$, $\flambda \ge \0$ and $\w \ge \0$.
The first constraint $\A\flambda + \q = \w$ is equivalent to
\begin{align*}
 (\fOmega+\A\fGamma)(\fGamma^{-1}\flambda-\fOmega^{-1}\w) &= (\fOmega-\A\fGamma)(\fGamma^{-1}\flambda+\fOmega^{-1}\w) - 2\q.
\end{align*}
From the rest three constraints and the fact that $\fGamma$ and
$\fOmega$ are diagonal matrices whose diagonal entries are positive,
if $\x=\frac12(\fGamma^{-1}\flambda-\fOmega^{-1}\w)$, it holds that
$|\x| =\frac12(\fGamma^{-1}\flambda+\fOmega^{-1}\w)$.
Therefore, $\x$ satisfies
\begin{align}
(\fOmega+\A\fGamma)\x &= (\fOmega-\A\fGamma)|\x| - \q \label{eq:fixed-point2-proof1}
\end{align}
and this is equivalent to \eqref{eq:fixed-point2}.

To prove (ii), from \eqref{eq:fixed-point2-proof1}, it holds that
$\A\fGamma(|\x|+\x)+\q = \fOmega(|\x|-\x)$.
By the relations $\flambda=\fGamma(|\x|+\x)$ and $\w=\fOmega(|\x|-\x)$, we obtain $\A\flambda+\q=\w$.
Since $\fGamma$ and $\fOmega$ are positive diagonal matrices,
it is easy to check that $\flambda$ and $\w$ are nonnegative vectors.
Finally, it is also easy to show
the element-wise complementarity between $\flambda$ and $\w$.
\hfill \qed
\end{proof}

We may use Theorem~\ref{thm:fixedpoint} to establish some iterative methods for solving $LCP(\q,\A)$, but we need to set appropriate matrices for the implicit fixed-point equation \eqref{eq:fixed-point2}
 in actual computations.
In particular, the splitting pair of $\fOmega$ is not unique.
By fixing $\fOmega_1= \fOmega$, $\fOmega_2=\O$ and $\fGamma=\frac{1}{\gamma} \E_m$, we
derive a simplified update equation of \eqref{eq:fixed-point2} as follows:
\begin{align}
\label{eq:simplified-fixed-point2}
(\M_1+\gamma \fOmega)\x=\N_1\x+(\gamma \fOmega-\M_2)|\x|+\N_2|\x|-\gamma \q.
\end{align}
Based on this equation, Zheng and Yin~\cite{zheng2013accelerated}
provided an update formula of the AMMSI methods:
%\begin{method}\upshape\cite{zheng2013accelerated}
%\label{mtd:generic-acc}
%Let $A=M_1-N_1=M_2-N_2$ be two splittings of the matrix $A\in\mathbb R^{n\times n}$, $\Omega\in\mathbb R^{n\times n}$ be a positive diagonal matrix, $\gamma$ be a positive constant, and
%$x^0\in \mathbb R^n$ be an initial vector.
%Generate the iteration sequence $\{x^k\}_{k=0}^\infty$
%by solving the following linear system for each $k = 0, 1, \ldots$:
\begin{align}
\label{eq:generic-iteration}
(\M_1+\gamma \fOmega)\x^{k+1}=\N_1\x^k+(\gamma \fOmega-\M_2)|\x^k|+\N_2|\x^{k+1}|-\gamma \q
\end{align}
%until a sequence $\{ z^k \}_{k=0}^\infty$ satisfies a certain convergence threshold, where
%$z^{k}=\frac1\gamma(|x^{k}|+x^{k})$.
%\end{method}
When the sequence
$\left\{\x^k\right\}_{k=0}^\infty$ converges enough,
the AMMSI methods output the impulse vector by using the relation
$\flambda = \fGamma (|\x| + \x) = \frac{|\x| + \x}{\gamma}$.

By changing the splitting pairs of $\A$,
the update formula~\eqref{eq:generic-iteration} above yields variant methods;
MMSIM ($\M_2=\A$ and $\N_2=\O$),
the accelerated modulus-based Jacobi (AMJ) iteration method
($\M_1=\D$, $\N_1=\L+\U$, $\M_2=\D-\U$ and $\N_2=\L$),
the accelerated modulus-based SOR (AMSOR) iteration method
($\M_1=\frac1\alpha \D-\L$, $\N_1=\left(\frac1\alpha-1\right)\D+\U$, $\M_2=\D-\U$ and $\N_2=\L$),
and the accelerated modulus-based accelerated overrelaxation (AMAOR) iteration method ($\M_1=\frac1\alpha(\D-\beta \L)$, $\N_1=\frac1\alpha((\alpha-1)\D+(\alpha-\beta)\L+\alpha \U)$, $\M_2=\D-\U$ and $\N_2=\L$).

In particular, the update formula of the accelerated modulus-based Gauss-Seidel (AMGS) method in~\cite{zheng2013accelerated}
is derived with
$\M_1=\D-\L$, $\N_1=\U$, $\M_2=\D-\U$ and $\N_2=\L$ as follows:
\begin{align}
(\D + \gamma \fOmega-\L)\x^{k+1}=\U\x^k+(\gamma \fOmega-\D+\U)|\x^k|+\L|\x^{k+1}|-\gamma \q. \label{eq:AMGS1}
\end{align}
Let $\Delta \x^k = \x^{k+1} - \x^k$ be
the difference between $\x^k$ and $\x^{k+1}$.
Then, \eqref{eq:AMGS1}
is equivalent to
\begin{align}
\begin{array}{rcl}
(\D+\gamma \fOmega)\Delta \x^k
&=& \L\x^{k+1}-(\gamma \fOmega+\D-\U)\x^k+(\gamma \fOmega-\D+\U)|\x^k| \\
& & +\L|\x^{k+1}|- \gamma \q.
\end{array}
\label{eq:amgs-iter2}
\end{align}
By Theorem~\ref{thm:fixedpoint} and $\fGamma = \frac{1}{\gamma} \E_m$, the sequence $\{\flambda^k\}_{k=0}^\infty$ for the LCP~\eqref{eq:lcp1} can be associated with the sequence $\{\x^k\}_{k=0}^\infty$ generated by \eqref{eq:AMGS1} by the relation
$\flambda^k=\frac{|\x^k|+\x^k}{\gamma}=\frac{2}{\gamma}\max\{\0,\x^k\}$,
thus $\flambda^k$ is a multiple of  the positive part of $\x^k$.
This motivates us to split $\x^k$ into the positive and negative parts such that  $\x^k=\x_+^k - \x_-^k$, where
$\x_+^k = \max\{\0,\x^k\} = \frac12(|\x^k| + \x^k)$ and
$\x_-^k=-\min\{\0,\x^k\} = \frac12(|\x^k|-\x^k)$.
From the relations $\x^k = \frac{\gamma}{2} \flambda^k - \x_-^k$ and
$|\x^k| = \frac{\gamma}{2} \flambda^k  + \x_-^k$,
\eqref{eq:amgs-iter2} is equivalent to
\begin{align*}
\begin{array}{rcl}
(\D+\gamma \fOmega)\Delta \x^k
&=&\gamma \L\flambda^{k+1}-(\gamma \fOmega+\D-\U)(\frac{\gamma}{2}\flambda^k-\x_-^k)\\
& & +(\gamma\fOmega-\D+\U)(\frac{\gamma}{2}\flambda^k+\x_-^k)-\gamma \q \\
&=&\gamma \L\flambda^{k+1}- \gamma (\D - \U) \flambda^k + 2 \gamma  \fOmega \x_-^k - \gamma \q.
\end{array}
\end{align*}
Therefore, for computing $\Delta x_i^k$, we only need the $i$th component of $\x_-^k$, which will be denoted as $(\x_-^k)_i$,
since $\D$ and $\fOmega$ are diagonal matrices.
This simplifies the computation of $\Delta \x^k$.
Recalling the decomposition of $\A = \D - \L - \U$,
we compute $\Delta x^k_i$ for each $i=1,\ldots,m$ by
\begin{align}
\Delta x^k_i&=\frac{\gamma}{D_{ii}+\gamma \Omega_{ii}}\left(\sum_{j=1}^{i-1}L_{ij}\lambda^{k+1}_j-D_{ii}\lambda^k_i+\sum_{j=i+1}^mU_{ij}\lambda^k_j+2\Omega_{ii}(\x_-^k)_i-q_i\right) \nonumber \\
% &=\frac{\gamma}{A_{ii}+\gamma \Omega_{ii}}\left(-\sum_{j=1}^{i-1}A_{ij}\lambda^{k+1}_j-A_{ii}\lambda^k_i-\sum_{j=i+1}^mA_{ij}\lambda^k_j+2\Omega_{ii}(x_-^k)_i-q_i\right)\\
&=-\frac{\gamma}{A_{ii}+\gamma \Omega_{ii}}\left(\sum_{j=1}^{i-1}A_{ij}\lambda^{k+1}_j+\sum_{j=i}^mA_{ij}\lambda^k_j+q_i-2\Omega_{ii}(\x_-^k)_i\right).
\label{eq:delta-x-update}
\end{align}

We can summarize a framework of the AMGS method as follows.
\begin{method}\upshape
	\label{mtd:amgs}
	\textbf{(the AMGS method for \eqref{eq:lcp1})}

	% Let $J\in\mathbb R^{m\times n}$ be a Jacobian matrix, $v\in\mathbb R^n$ be a generalized velocity vector, $M\in\mathbb R^{n\times n}$ be a generalized mass matrix, $\alpha>0$ be a positive constant, and
	Choose a nonnegative vector  $\flambda^0\in\mathbb R^m$
	as an initial vector.
	% Assume that $A=JM^{-1}J^T$ is a positive definite matrix.
	Generate the iteration sequence $\{\flambda^k\}_{k=0}^\infty$ by the following procedure:
	\begin{algorithmic}
		\State $\x^0 \gets \frac{\gamma}{2} \flambda^0$
		\State $k\gets0$
		\Repeat \ \ {$k=1,2,\ldots$}
		\For {$i=1,2,\ldots,m$}
		\State $x_i^{k+1} \gets x_i^k-\frac{\gamma}{A_{ii}+\gamma \Omega_{ii}}\left(\sum_{j=1}^{i-1}A_{ij}\lambda^{k+1}_j+\sum_{j=i}^mA_{ij}\lambda^k_j+q_i-2\Omega_{ii}(\x_-^k)_i\right)$
		\EndFor
		\State $\flambda^k = \frac{2}{\gamma} \x_+^k$.
		\Until {$\x^k$ satisfies a certain convergence threshold}
	\end{algorithmic}
\end{method}

\section{Accelerated modulus-based matrix splitting iteration methods for interactive rigid-body simulation}
\label{sec:amgs4rbs}
In this section, we propose a numerical method to solve LCPs that arises from interactive rigid-body simulations using the AMGS method.
Since the AMGS method is a generic method for LCPs, it is possible to simply apply the AMGS method to \eqref{eq:lcp1}.
However, explicit evaluation of $\A=\J\M^{-1}\J^T$ is inefficient even though the matrices $\J$ and $\M^{-1}$ are sparse. Thus, such a simple application of the AMGS method is not practical. As we will discuss in later, this is mainly because the number of the non-zero elements in $\A$ becomes very large.
To overcome this inefficiency, we modify the AMGS method so that it does not require the explicit evaluation of the matrix $\A$.
In a similar way to the AMGS method, we can also modify the AMSOR method for solving LCPs in the rigid-body simulations.

As already pointed out at above,  the direct computation of $\A$ is unfavorable for real-time simulations.
In the viewpoint of the computation cost, we should avoid the computations
of
$\sum_{j=1}^{i-1}A_{ij}\lambda^{k+1}_j$ and $\sum_{j=i}^mA_{ij}\lambda^k_j$, which involve all the off-diagonal elements of $\A$.

To improve the computation efficiency,
we introduce an intermediate variables $\flambda^{k+1,i} \in \Real^m$ and $\v^{k+1,i} \in \Real^{n}$. In particular, $\v^{k+1,i}$ stores information of applied impulse \cite{erleben2004stable, tonge2012mass}.
For $i=1,\ldots,m$,
let
\begin{align}
\label{eq:pgs-lambda-intermediate}
\flambda^{k+1,i}&=\begin{pmatrix}
\lambda_1^{k+1} & \ldots & \lambda_{i-1}^{k+1} & \lambda_i^k & \ldots & \lambda_m^k
\end{pmatrix}^T\\
\label{eq:pgs-v-intermediate}
\v^{k+1,i}&=\v+\widehat{\M} \flambda^{k+1,i}
\end{align}
where $\widehat{\M} = \M^{-1}\J^T.$
By the definitions of $\A$ and $\q$, it holds
\begin{align*}
&\sum_{j=1}^{i-1}A_{ij}\lambda_j^{k+1}+\sum_{j=i}^mA_{ij}\lambda_j^k+q_i
= \left(\A \flambda^{k+1,i}\right)_i + q_i \\
% =&\sum_{j=1}^{i-1}\left(JM^{-1}J^T\right)_{ij}\lambda_j^{k+1} +\sum_{j=i}^m\left(JM^{-1}J^T\right)_{ij}\lambda_j^k+(Jv)_i+b_i \\
=&\left(\J\M^{-1}\J^T\flambda^{k+1,i}\right)_i+(\J\v)_i+b_i
=\left(\J\v^{k+1,i}\right)_i+b_i =\sum_{j=1}^m J_{ij}v_j^{k+1,i}+b_i,
\end{align*}
and this leads to
\begin{align*}
% \label{eq:amgs-iter3}
x_i^{k+1}=x_i^k+\Delta x^k_i=x_i^k-\frac{\gamma}{A_{ii}+\gamma \Omega_{ii}}\left(\sum_{j=1}^m J_{ij}v_j^{k+1,i}+b_i-2\Omega_{ii}(\x_-^k)_i\right).
\end{align*}

% We define $\e_i \in \Real^m$ to denote the $i$th unit vector, which is a vector of all zeros expecting the $i$th position being 1.
Due to the relation $\flambda^{k} =
\frac{2}{\gamma} \x_+^k$,
we can compute $\flambda^{k+1, i+1}$
by updating only the $i$th position of $\flambda^{k+1, i}$,
\begin{align*}
\begin{array}{rcl}
\flambda^{k+1, i+1}
%&=&\begin{pmatrix}
% \lambda_1^{k+1} & \ldots & \lambda_{i-1}^{k+1} & \lambda_i^k &
% \lambda_{i+1}^{k+1} & \ldots & \lambda_m^k
% \end{pmatrix}^T\\
&=&\begin{pmatrix}
\lambda_1^{k+1} & \ldots & \lambda_{i-1}^{k+1} & \frac{2}{\gamma}(\x_+^{k+1})_i &
\lambda_{i+1}^{k} & \ldots & \lambda_m^k
\end{pmatrix}^T.\\
\end{array}
\end{align*}
Thus, from \eqref{eq:pgs-v-intermediate}, we obtain
% We can compute $v^{k+1,i+1}$ by applying the difference of the impulse $\lambda_i^{k+1}-\lambda_i^k$:
\begin{align*}
\begin{array}{rcl}
\v^{k+1, i+1} &=& \v^{k+1,i}
+ \widehat{\M} (\flambda^{k+1, i+1}  - \flambda^{k+1, i}) \\
&=& \v^{k+1,i} + \widehat{\M}_{*i} (\lambda_i^{k+1, i+1} - \lambda_i^{k+1}),
\end{array}
\end{align*}
where $\widehat{\M}_{*i}$ is the $i$th column of $\widehat{\M}$.

The following method summarizes the proposed AMGS method for rigid-body simulations.
\begin{method}\upshape
\label{mtd:proposedamgs}
\textbf{(the proposed AMGS method for rigid-body simulations)}

% Let $J\in\mathbb R^{m\times n}$ be a Jacobian matrix, $v\in\mathbb R^n$ be a generalized velocity vector, $M\in\mathbb R^{n\times n}$ be a generalized mass matrix, $\alpha>0$ be a positive constant, and
Choose a nonnegative vector  $\flambda^0\in\mathbb R^m$
as an initial vector.
% Assume that $A=JM^{-1}J^T$ is a positive definite matrix.
Generate the iteration sequence $\{\flambda^k\}_{k=0}^\infty$ by the following procedure:
\begin{algorithmic}
\State $\x^0 \gets \frac{\gamma}{2} \flambda^0$
\State $\flambda^1 \gets \flambda^0$
\State $\widehat{\M} = \M^{-1} \J^T$.
\State $\v^{1,1}\gets \v+\widehat{\M} \flambda^0$
\State $k\gets0$
\Repeat \ \ {$k=1,2,\ldots$}
	\For {$i=1,2,\ldots,m$}
		\State $x_i^{k+1} \gets x_i^k-\frac{\gamma}{A_{ii}+\gamma \Omega_{ii}}\left(\sum_{j=1}^m J_{ij}v_j^{k+1,i}+b_i-2\Omega_{ii}(\x_-^k)_i\right)$
		\State $\lambda_i^{k+1}\gets \frac{2}{\gamma} \max\left\{0, x_i^{k+1}\right\}$
		\If {$i=m$}
			\State $\v^{k+2,1}\gets \v^{k+1,i}+\widehat{\M}_{*i}(\lambda_i^{k+1}-\lambda_i^k)$
		\Else
			\State $\v^{k+1,i+1}\gets \v^{k+1,i}+\widehat{\M}_{*i}(\lambda_i^{k+1}-\lambda_i^k)$
		\EndIf
	\EndFor
\Until {$\x^k$ satisfies a certain convergence threshold}
\end{algorithmic}
\end{method}

We compare the computation costs of Method~\ref{mtd:amgs} and Method~\ref{mtd:proposedamgs}.
In Method~\ref{mtd:amgs}, we update $\x^{k+1} = \x^k + \Delta \x^k$,
and for each $i=1,\ldots, m$, we compute
$\sum_{j=1}^{i-1}A_{ij}\lambda^{k+1}_j$ and $\sum_{j=i}^mA_{ij}\lambda^k_j$ in \eqref{eq:delta-x-update}.
Therefore, if $\A \in \Real^{m \times m}$ is fully dense,
the computation cost to obtain $\x^{k+1}$ is $\mathcal{O}(m^2)$.

In Method~\ref{mtd:proposedamgs}, we can exploit the structures of $\J \in \Real^{m \times n}$ and $\M \in \Real^{n \times n}$.
We use $nnz(\J)$ to denote the number of nonzero elements in $\J$.
Since the matrix $\M$ is composed with $\E_3$ and $\I_1, \ldots, \I_N \in \Real^{3 \times 3}$ at the diagonal positions as shown in \eqref{eq:structure-M}, we know that $nnz(\widehat{\M}) = nnz(\M^{-1} \J^T) \le 3 nnz(\J)$.
Actually, each row of $\widehat{\M}$ is a multiple of one column of $\J$
or a linear combinations of three columns of $\J$.
To obtain $x_i^{k+1}$ from $x_i^k$,
the computation of $\sum_{j=1}^m J_{ij} v_j^{k+1}$ is required for $i=1,\ldots,m$,
thus the computation cost in the $k$th outer iteration amounts to $\mathcal{O}(nnz(\J))$.
Similarly,
to obtain $\v_{k+1}^{i+1}$ from $\v_{k+1,i}$, we need
$\widehat{\M}_{*i}(\lambda_i^{k+1}-\lambda_i^k)$,
thus the computation cost of this part for each $k$ is
$\mathcal{O}(nnz(\widehat{\M}))$, and this is same as
$\mathcal{O}(nnz(\J))$.
Consequently,
the computation cost to obtain $\x^{k+1}$ from $\x^k$
in Method~\ref{mtd:proposedamgs} is $\mathcal{O}(nnz(\J))$.
In the rigid-body simulation, $nnz(\J)$ is $\mathcal{O}(m)$ and is smaller than $\mathcal{O}(m^2)$,
thus we can expect Method~\ref{mtd:proposedamgs} is much faster
than the direct use of $\A$ in Method~\ref{mtd:amgs}. We will verify this efficiency
in the numerical experiments of Section~\ref{sec:numexp}.

We should mention that the matrix $\A$ is not always fully-dense, and we can actually find some zero elements in $\A$ of the test instances that will be used in Section~\ref{sec:numexp}. However, the positions of nonzero elements cannot be determined before the multiplication $\A = \J \M^{-1} \J^T$. In addition, even if we skip the zero elements of $\A$ in Method~\ref{mtd:amgs}, the effect is less significant than Method~\ref{mtd:proposedamgs} and the numerical efficiency of Method~\ref{mtd:amgs} is still insufficient for real-time simulations.

\subsection{Linear Complementarity Problems with Lower and Upper Bounds}
\label{sec:boxedlcp}

In this section, we discuss a method for solving LCPs with lower and upper bounds on $\flambda$, which often occur in the formulations of contact constraints with friction \cite{tonge2012mass, poulsen2010heuristic}. We call such LCPs with lower and upper bounds ``Boxed LCPs (BLCPs)''.
Consider the following BLCP:
\begin{align*}
BLCP(\q,\l,\u,\A)\begin{acases}
\A\flambda+\q &=\w \\
\l \leq \flambda &\leq \u\\
\text{for each}\ i=1,\ldots,m,&
\begin{cases}
w_i\geq0&\text{if \ }\lambda_i=l_i\\
w_i\leq0&\text{if \ }\lambda_i=u_i\\
w_i=0&\text{if \ }l_i < \lambda_i<u_i.
\end{cases}
\end{acases}
\end{align*}
Here, $\l \in \Real^m$ and $\u \in \Real^m$ are the lower and the upper bounds, respectively. Without loss of generality, we assume $0 \le l_i < u_i$ for each $i=1,\ldots,m$.

We define a projection function of $\flambda$  to the interval
$\l$ and $\u$ by
\begin{align*}
p_{BLCP}(\flambda) = \min\left\{\max\left\{\l, \flambda \right\},\u \right\}.
\end{align*}
Since $\l$ is a nonnegative vector, so is $p_{BLCP}(\flambda)$.
With this projection, we can give a AMGS method for Boxed LCPs as follows.
\begin{method}\upshape
\label{mtd:amgs-boxed}
	\textbf{(an AMGS method for Boxed LCPS in rigid-body simulations)}

	% Let $J\in\mathbb R^{m\times n}$ be a Jacobian matrix, $v\in\mathbb R^n$ be a generalized velocity vector, $M\in\mathbb R^{n\times n}$ be a generalized mass matrix, $\alpha>0$ be a positive constant, and
	Choose a nonnegative vector  $\flambda^0\in\mathbb R^m$
	as an initial vector.
	% Assume that $A=JM^{-1}J^T$ is a positive definite matrix.
	Generate the iteration sequence $\{\flambda^k\}_{k=0}^\infty$ by the following procedure:
	\begin{algorithmic}
		\State $\flambda^0 \gets p_{BLCP}(\flambda^0$)
		\State $\x^0 \gets \frac{\gamma}{2} \flambda^0$
		\State $\flambda^1 \gets \flambda^0$
		\State $\widehat{\M} = \M^{-1} \J^T$
		\State $\v^{1,1}\gets \v+\widehat{\M} \flambda^0$
		\State $k\gets0$
		\Repeat \ \ {$k=1,2,\ldots$}
		\For {$i=1,2,\ldots,m$}
		\State $x_i^{k+1} \gets x_i^k-\frac{\gamma}{A_{ii}+\gamma \Omega_{ii}}\left(\sum_{j=1}^m J_{ij}v_j^{k+1,i}+b_i-2\Omega_{ii}(\x_-^k)_i\right)$
		\State $\lambda_i^{k+1}\gets \frac{2}{\gamma} p_{BLCP}(x_i^{k+1})$
		\If {$i=m$}
		\State $\v^{k+2,1}\gets \v^{k+1,i}+\widehat{\M}_{*i}(\lambda_i^{k+1}-\lambda_i^k)$
		\Else
		\State $\v^{k+1,i+1}\gets \v^{k+1,i}+\widehat{\M}_{*i}(\lambda_i^{k+1}-\lambda_i^k)$
		\EndIf
		\EndFor
		\Until {$\x^k$ satisfies a certain convergence threshold}
	\end{algorithmic}
\end{method}

Note that $BLCP(\q,\l,\u,\A)$ with $\l = \0$ and $\u$ being a very large vector represents the same problem as $LCP(\q,\A)$, so Method~\ref{mtd:amgs-boxed} can be considered as a generalization of Method~\ref{mtd:amgs}.

\section{Convergence theorem}
\label{sec:convthm}
In this section, we focus on the convergence of the accelerated modulus-based Gauss-Seidel (AMGS) method.
Zheng and  and Yin~\cite{zheng2013accelerated} discussed only the two cases:
(i) $\A$ is a positive definite matrix and $\fOmega$ is a multiple of the identity matrix ($\fOmega  =\bar{\omega} \E_m$ for some $\bar{\omega} > 0$), and (ii) $\A$ is an $H_+$-matrix.

Since the matrix $\A$ is always positive definite in the rigid-body simulation, we
extend the proof in \cite{zheng2013accelerated}
so that we can handle a more general form of $\fOmega$.
For example,
if we can take $\fOmega = \alpha \D$, we may be able to improve the convergence, as mentioned later in Remarks~\ref{rem:bestalpha} and \ref{rem:bestomega}.
Here, $\alpha\in \Real$ is a positive constant,
and $\D$ is the diagonal matrix whose diagonal elements are
those of $\A$, thus $\fOmega = \alpha \D$ is not covered by \cite{zheng2013accelerated}.

In order to establish the convergence theorem of the AMGS method with $\fOmega$ which is not always a multiple of the identity matrix, we need Lemma~\ref{lem:lower} below.
We use $\|\x \| = \sqrt{\x^T \x}$ to denote the Euclidean norm of $ \x \in \mathbb R^m$.
% Let $\| \A \|$ be a matrix norm of $ \A \in \mathbb R^{m\times m}$ that is compatible with the vector norm, \textit{i.e.}, $\|\A\x\|\leq\|\A\|\|\x\|$ for
% $\forall \A\in\mathbb R^{m \times m}, \forall x\in\mathbb R^m$.
We also use $\|\X\|$ to denote the spectral norm of a matrix $\X\in\mathbb R^{m\times m}$,
thus we can employ $\| \X \x \| \le \| \X \| \| \x \|$.
In addition
let $diag(\X)$  denote the diagonal matrix whose diagonal elements correspond to those of a matrix $\X\in\mathbb R^{m\times m}$, \textit{i.e.},
\begin{align*}
diag(\X)=\begin{pmatrix}
X_{11} &        &        &        \\
& X_{22} &        &        \\
&        & \ddots &        \\
&        &        & X_{mm}
\end{pmatrix}.
\end{align*}

\begin{lemma}\upshape
    \label{lem:lower}
    Let $\L_1, \L_2\in\mathbb R^{m \times m}$ be lower triangular matrices, and $\widehat{\L}\in\mathbb R^{m\times m}$ be a non-singular lower triangular matrix.
    Then, the following equations hold:
    \begin{align}
    \label{eq:lem-lower-1}
    \|\L_1\|&=\|diag(\L_1)\|\\
    \label{eq:lem-lower-2}
    diag(\L_1\L_2)&=diag(\L_1)diag(\L_2)\\
    \label{eq:lem-lower-3}
    diag(\widehat{\L}^{-1})&=(diag(\widehat{\L}))^{-1}
    \end{align}
\end{lemma}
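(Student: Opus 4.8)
The plan is to prove the three identities in the order \eqref{eq:lem-lower-2}, \eqref{eq:lem-lower-3}, \eqref{eq:lem-lower-1}: the first is a one-line entry-wise computation, the second is an immediate consequence of it, and the third is the only one with genuine content, resting on the spectrum of a triangular matrix.

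For \eqref{eq:lem-lower-2}, I would compare diagonal entries. For each $i$, the $(i,i)$ entry of $\L_1\L_2$ equals $\sum_{k=1}^{m}(\L_1)_{ik}(\L_2)_{ki}$; since $\L_1$ is lower triangular we have $(\L_1)_{ik}=0$ for $k>i$, and since $\L_2$ is lower triangular we have $(\L_2)_{ki}=0$ for $k<i$, so every term with $k\neq i$ drops out and the sum reduces to $(\L_1)_{ii}(\L_2)_{ii}$, which is exactly the $(i,i)$ entry of $diag(\L_1)diag(\L_2)$. As both $diag(\L_1\L_2)$ and $diag(\L_1)diag(\L_2)$ are diagonal matrices, they coincide.

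For \eqref{eq:lem-lower-3}, I would first invoke the standard fact that the inverse of a nonsingular lower triangular matrix is again lower triangular (e.g. via forward substitution), so that $\widehat{\L}^{-1}$ is an admissible argument in \eqref{eq:lem-lower-2}. Applying \eqref{eq:lem-lower-2} to the pair $\widehat{\L}^{-1}$ and $\widehat{\L}$ gives $diag(\widehat{\L}^{-1})diag(\widehat{\L})=diag(\widehat{\L}^{-1}\widehat{\L})=diag(\E_m)=\E_m$. Because $\widehat{\L}$ is nonsingular its diagonal entries are all nonzero, so $diag(\widehat{\L})$ is an invertible diagonal matrix; right-multiplying the previous identity by $(diag(\widehat{\L}))^{-1}$ yields \eqref{eq:lem-lower-3}.

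Finally, for \eqref{eq:lem-lower-1}: since $\L_1$ is lower triangular, its eigenvalues, counted with multiplicity, are precisely its diagonal entries $(\L_1)_{11},\ldots,(\L_1)_{mm}$, which are also the eigenvalues of the diagonal matrix $diag(\L_1)$; hence both $\|\L_1\|$ and $\|diag(\L_1)\|$ reduce to $\max_i|(\L_1)_{ii}|$, giving the claim. I do not expect a serious obstacle in this lemma — the whole statement is elementary — and the only step that demands any care is this last one, namely recognizing that for a triangular matrix the relevant norm is controlled by the diagonal spectrum and not by the off-diagonal part; \eqref{eq:lem-lower-2} and \eqref{eq:lem-lower-3} are routine once the triangularity is used to annihilate the off-diagonal contributions in the product.
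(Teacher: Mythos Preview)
Your argument is essentially identical to the paper's: the same entry-wise computation for \eqref{eq:lem-lower-2}, the same deduction of \eqref{eq:lem-lower-3} from \eqref{eq:lem-lower-2}, and the same appeal to the diagonal spectrum of a triangular matrix for \eqref{eq:lem-lower-1}. Only the order of presentation differs.

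That said, be aware that your proof of \eqref{eq:lem-lower-1} --- and the paper's --- rests on the assertion that the spectral norm equals the maximum absolute eigenvalue. This is false for non-normal matrices: the spectral norm $\|\X\|$ is the largest singular value $\sigma_{\max}(\X)=\sqrt{\lambda_{\max}(\X^T\X)}$, not the spectral radius. For a strictly lower triangular matrix such as
\[
\L_1=\begin{pmatrix}0&0\\1&0\end{pmatrix},
\]
all eigenvalues vanish, yet $\|\L_1\|=1\neq 0=\|diag(\L_1)\|$. So \eqref{eq:lem-lower-1} is not true as stated, and the step ``hence both $\|\L_1\|$ and $\|diag(\L_1)\|$ reduce to $\max_i|(\L_1)_{ii}|$'' fails. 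Your proofs of \eqref{eq:lem-lower-2} and \eqref{eq:lem-lower-3} are fine; the issue lies solely in \eqref{eq:lem-lower-1}, and it is a flaw you inherit from (and share with) the paper itself.
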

\begin{proof}
    We first prove \eqref{eq:lem-lower-1}.
    As $\L_1$ is a lower triangular matrix,
    the eigenvalues of $\L_1$ are $(\L_1)_{11}, (\L_1)_{22}, \ldots, (\L_1)_{mm}$,
    and these values are also those of the diagonal matrix  $diag(\L_1)$.
     Since the spectral norm of a matrix is the maximum absolute value of its eigenvalues,
    we have $\|\L_1\|=\|diag(\L_1)\|$.

    Next, we consider \eqref{eq:lem-lower-2}.
    Focusing the $i$th diagonal element of $\L_1\L_2$, we have
    \begin{align*}
    (\L_1\L_2)_{ii}&=\sum_{j=1}^m (\L_1)_{ij} (\L_2)_{ji}
    &=\sum_{j\in \{1,2,\ldots,i\} \cap \{i,i+1,\ldots,m\}} (\L_1)_{ij} (\L_2)_{ji}
    &=(\L_1)_{ii} (\L_2)_{ii},
    \end{align*}
    therefore, it holds that $diag(\L_1\L_2)=diag(\L_1)diag(\L_2)$.

    Finally, we prove \eqref{eq:lem-lower-3}.
    By \eqref{eq:lem-lower-2} and the fact that the inverse matrix of a lower triangular matrix is also a lower triangular matrix, we obtain
    \begin{align*}
    diag(\widehat{\L})diag(\widehat{\L}^{-1})=diag(\widehat{\L}\widehat{\L}^{-1})=\E_m.
    \end{align*}
    Therefore, $diag(\widehat{\L}^{-1})$ is the inverse matrix of $diag(\widehat{\L})$.
\end{proof}

\begin{comment}
Theorem~\ref{thm:conv1} shows the convergence property of
Method~\ref{mtd:generic-acc}.
\begin{theorem}\upshape\cite{zheng2013accelerated}
\label{thm:conv1}
Let $A\in\mathbb R^{n\times n}$ be a positive definite matrix, $A=M_1-N_1=M_2-N_2$ be two matrix splittings of $A$ with $M_1$ being positive definite.
Assume that $\Omega\in\mathbb R^{n\times n}$ is a positive diagonal matrix and $\gamma$ is a positive constant.
Define
\begin{align*}
\begin{aligned}
\xi(\Omega)&=\|(\Omega+M_1)^{-1}N_1\|,\\
\eta(\Omega)&=\|(\Omega+M_1)^{-1}N_2\|,\\
\mu(\Omega)&=\|(\Omega+M_1)^{-1}(\Omega-M_1)\|, \ \mbox{and} \\
\delta(\Omega)&=\mu(\Omega)+2\xi(\Omega)+2\eta(\Omega).
\end{aligned}
\end{align*}
If $\delta(\Omega)<1$, the iteration sequence $\{z^k\}_{k=0}^\infty\subset\mathbb R_+^n$ of Method~\ref{mtd:generic-acc} converges to the unique solution $z^*\in\mathbb R_+^n$ for an arbitrary initial vector $x^0\in\mathbb R^n$.
\end{theorem}
\end{comment}

We are now ready to establish the convergence theorem. The theorem covers the case that $\fOmega=\alpha \D$, which will be actually used in numerical experiments of Section~\ref{sec:numexp}.
For the subsequent discussion, we define
$\tau = \|(\D - \L)^{-1} \U \|$.

\begin{theorem}\upshape
\label{thm:amgs-conv}
\begin{comment}
Suppose $\A\in\mathbb R^{m\times m}$ is a positive definite matrix, and it is
decomposed into
$\A = \D-\L-\U$ with $\D$ being a diagonal matrix, $\L$  a strictly lower triangular matrix, and $\U$ being a strictly upper triangular matrix.
Suppose Method \ref{mtd:generic-acc} with $M_1=D-L$, $N_1=U$, $M_2=D-U$, $N_2=L$, and $\Omega=\alpha D$, where $\alpha\in\mathbb R$ is a positive constant.
\end{comment}
Let
\begin{align*}
\delta =  2  \| (\D + \gamma \fOmega)^{-1} \D \| \tau  + \|(\D + \gamma \fOmega)^{-1}(\gamma \fOmega - \D) \|.
\end{align*}
If $\delta <1$, the iteration sequence $\{\flambda ^k\}_{k=0}^\infty\subset\mathbb R^m$ generated by Method~\ref{mtd:proposedamgs} with
an arbitrary nonnegative initial vector $\flambda^0\in\mathbb R^m$
converges to the unique solution $\flambda^*\in\mathbb R^m$ of
$LCP(\q, \A)$.
\end{theorem}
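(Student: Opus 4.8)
The plan is to reduce the statement to a one-step contraction $\|\x^{k+1}-\x^*\|\le\delta\|\x^k-\x^*\|$ for the auxiliary sequence $\{\x^k\}$ that drives Method~\ref{mtd:proposedamgs} and, by construction, obeys the implicit iteration \eqref{eq:AMGS1}. First I would record the preliminaries: since $\A$ is positive definite, $LCP(\q,\A)$ has a unique solution $\flambda^*$; by Theorem~\ref{thm:fixedpoint} with $\fGamma=\frac1\gamma\E_m$, $\fOmega_1=\fOmega$, $\fOmega_2=\O$ and the AMGS splittings $\M_1=\D-\L$, $\N_1=\U$, $\M_2=\D-\U$, $\N_2=\L$, a fixed point $\x^*$ of \eqref{eq:AMGS1} exists and every fixed point satisfies $\frac{|\x^*|+\x^*}{\gamma}=\flambda^*$. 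The iteration is well-posed because $\D+\gamma\fOmega-\L$ is lower triangular with positive diagonal, so $\x^{k+1}$ is obtained from $\x^k$ by forward substitution; and Method~\ref{mtd:proposedamgs} is merely the algebraic reorganization of \eqref{eq:AMGS1} derived in Section~\ref{sec:amgs4rbs}, so it suffices to analyze \eqref{eq:AMGS1}.

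Subtracting the fixed-point identity for $\x^*$ from \eqref{eq:AMGS1} and writing $\e^k:=\x^k-\x^*$ gives
\[
(\D+\gamma\fOmega-\L)\e^{k+1}=\U\e^k+(\gamma\fOmega-\D+\U)\big(|\x^k|-|\x^*|\big)+\L\big(|\x^{k+1}|-|\x^*|\big).
\]
I would take Euclidean norms, use $\|\X\x\|\le\|\X\|\|\x\|$, and note that every component of $|\x^k|-|\x^*|$ is bounded in absolute value by the corresponding component of $\x^k-\x^*$, so $\||\x^k|-|\x^*|\|\le\|\e^k\|$. The troublesome term carrying the still-unknown $|\x^{k+1}|$ then disappears: $(\D+\gamma\fOmega-\L)^{-1}$ is lower triangular, hence $(\D+\gamma\fOmega-\L)^{-1}\L$ is strictly lower triangular, so by \eqref{eq:lem-lower-1} its spectral norm equals that of its zero diagonal. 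This leaves
\[
\|\e^{k+1}\|\le\big(\|(\D+\gamma\fOmega-\L)^{-1}\U\|+\|(\D+\gamma\fOmega-\L)^{-1}(\gamma\fOmega-\D+\U)\|\big)\|\e^k\|.
\]

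To match the constant in the statement, I would write $(\D+\gamma\fOmega-\L)^{-1}\U=\big[(\D+\gamma\fOmega-\L)^{-1}(\D-\L)\big](\D-\L)^{-1}\U$; the bracketed factor is lower triangular, so \eqref{eq:lem-lower-2}--\eqref{eq:lem-lower-3} give $\|(\D+\gamma\fOmega-\L)^{-1}(\D-\L)\|=\|(\D+\gamma\fOmega)^{-1}\D\|$, whence $\|(\D+\gamma\fOmega-\L)^{-1}\U\|\le\|(\D+\gamma\fOmega)^{-1}\D\|\,\tau$. Splitting $\gamma\fOmega-\D+\U=(\gamma\fOmega-\D)+\U$ and treating the diagonal part the same way yields $\|(\D+\gamma\fOmega-\L)^{-1}(\gamma\fOmega-\D)\|=\|(\D+\gamma\fOmega)^{-1}(\gamma\fOmega-\D)\|$, while the $\U$-part is bounded as above; summing, the coefficient of $\|\e^k\|$ is at most $2\|(\D+\gamma\fOmega)^{-1}\D\|\,\tau+\|(\D+\gamma\fOmega)^{-1}(\gamma\fOmega-\D)\|=\delta$. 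Hence $\|\e^{k+1}\|\le\delta\|\e^k\|$; as $\delta<1$, $\e^k\to\0$, so $\x^k\to\x^*$, and since $t\mapsto\max\{0,t\}$ is $1$-Lipschitz, $\flambda^k=\frac2\gamma\x_+^k\to\frac2\gamma\x_+^*=\flambda^*$.

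The main obstacle is exactly the implicit term $\L|\x^{k+1}|$ in \eqref{eq:AMGS1}: being nonlinear in $\x^{k+1}$ it cannot be transposed, and a careless estimate would leave $\|\e^{k+1}\|$ on both sides. Lemma~\ref{lem:lower} is what resolves this — the matrix $(\D+\gamma\fOmega-\L)^{-1}\L$ multiplying $|\x^{k+1}|-|\x^*|$ is strictly lower triangular and therefore has spectral norm $0$, so the recursion becomes a genuine contraction. The remaining, purely bookkeeping, difficulty is converting the norms involving the triangular matrix $\D+\gamma\fOmega-\L$ into the cleaner quantities $\tau$, $\|(\D+\gamma\fOmega)^{-1}\D\|$ and $\|(\D+\gamma\fOmega)^{-1}(\gamma\fOmega-\D)\|$ of the statement; this is again handled by Lemma~\ref{lem:lower}, through replacing a lower-triangular matrix's spectral norm by that of its diagonal.
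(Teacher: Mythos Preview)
Your proof follows essentially the same route as the paper's: subtract the fixed-point identity from \eqref{eq:AMGS1}, take norms, invoke Lemma~\ref{lem:lower} to eliminate the implicit term $(\D+\gamma\fOmega-\L)^{-1}\L(|\x^{k+1}|-|\x^*|)$ via its zero diagonal, and reduce the remaining lower-triangular factors to their diagonals to recover $\delta$. The only cosmetic difference is that you split $\gamma\fOmega-\D+\U$ directly as $(\gamma\fOmega-\D)+\U$, whereas the paper first inserts $\pm\L$ and works with $\eta=\|(\D-\L+\gamma\fOmega)^{-1}(\gamma\fOmega-\D+\L)\|$ and $\mu=\|(\D-\L+\gamma\fOmega)^{-1}\L\|$ before showing $\mu=0$; both paths yield the identical bound.
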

\begin{proof}

    In the AMMSI methods \cite{zheng2013accelerated} from which the AMGS was derived,
    we chose $\fOmega_1 = \fOmega, \fOmega_2 = \O$ and $\fGamma = \frac{1}{\gamma} \E_m$
    for the simplified fixed-point equation \eqref{eq:simplified-fixed-point2}.
    Let $(\flambda^*,\w^*)\in\mathbb{R}^m \times \mathbb{R}^m$ be a solution of $LCP(\q,\A)$,
    and let $\x^*=\frac12 (\gamma \flambda^* - \fOmega^{-1} \w^*)$.
    From Theorem~\ref{thm:fixedpoint},
    $\x^*$
    satisfies \eqref{eq:simplified-fixed-point2},  and
    % Due to Theorem~\ref{thm:fixedpoint},
    the convergence of $\left\{\flambda^k \right\}_{k=0}^\infty$ to
    $\flambda^*$ can be guaranteed by
    that of $\left\{\x^k \right\}_{k=0}^\infty$ to
    $\x^*$. Therefore, we focus the convergence of
    the sequence $\left\{\x^k \right\}_{k=0}^\infty$.
    Subtracting \eqref{eq:simplified-fixed-point2} with $\x^*$ from the update formula \eqref{eq:generic-iteration},
    we obtain
    %\begin{align*}
    %\begin{aligned}
    %(\M_1+\gamma \fOmega)(\x^{k+1}-x^*)={}&\N_1(\x^k-\x^*)+(\gamma \fOmega-\M_2)(|\x^k|-|\x^*|)\\
    %&+\N_2(|\x^{k+1}|-|\x^*|),
    %\end{aligned}
    %\end{align*}
    %or equivalently,
    \begin{equation*}
    \label{eq:err-rel-old}
    \begin{aligned}
    (\M_1+\gamma \fOmega)(\x^{k+1}-\x^*)={}&\N_1(\x^k-\x^*)+(\gamma \fOmega-\M_2)(|\x^k|-|\x^*|)\\
    &+\N_2(|\x^{k+1}|-|\x^*|).
    \end{aligned}
    \end{equation*}
    In the setting of the AMGS method ($\M_1 = \D - \L, \N_1 = \U, \M_2 = \D - \U$ and $\N_2 = \L$), we can further evaluate this inequality as follows:
    \begin{equation}
   %  \label{eq:err-rel}
    \begin{aligned}
    (\x^{k+1}-\x^*)={}&(\D - \L + \gamma \fOmega)^{-1} \U(\x^k-\x^*)\\
    & +(\D - \L + \gamma \fOmega)^{-1}(\gamma \fOmega-\D + \U)(|\x^k|-|\x^*|)\\
    &+(\D - \L + \gamma \fOmega)^{-1} \L(|\x^{k+1}|-|\x^*|).
    \end{aligned}
    \end{equation}
    Using the triangular inequality $\| |\a| - |\b| \| \le \|\a - \b\| $ for any two vectors $\a$ and $\b$ of the same dimension, we have
\begin{equation}
\begin{aligned}
\| \x^{k+1}-\x^* \| \le {}&\|(\D - \L + \gamma \fOmega)^{-1} \U \| \|\x^k-\x^* \|\\
& +\| (\D - \L + \gamma \fOmega)^{-1}(\gamma \fOmega-\D + \U)\| \|\x^k-\x^*\|\\
&+\| (\D - \L + \gamma \fOmega)^{-1} \L \| \|\x^{k+1}-\x^*\| \\
\le {} &\|(\D - \L + \gamma \fOmega)^{-1} \U \| \|\x^k-\x^* \|\\
& +\| (\D - \L + \gamma \fOmega)^{-1}(\gamma \fOmega-\D + \L - \L + \U)\| \|\x^k-\x^*\|\\
&+\| (\D - \L + \gamma \fOmega)^{-1} \L \| \|\x^{k+1}-\x^*\|\\
\le {} &2 \|(\D - \L + \gamma \fOmega)^{-1} \U \| \|\x^k-\x^* \|\\
& +\| (\D - \L + \gamma \fOmega)^{-1}(\gamma \fOmega-\D+\L)\| \|\x^k-\x^*\|\\
&+ \| (\D - \L + \gamma \fOmega)^{-1} \L \| (\|\x^{k}-\x^*\|
+ \|\x^{k+1}-\x^*\|).
\end{aligned}\label{eq:err-rel}
\end{equation}
Since $\D$ and $\fOmega$ are diagonal matrices
and $\L$ is a strictly lower triangular matrix,
$\D-\L + \gamma \fOmega$ is a lower triangular matrix,
so is $(\D-\L + \gamma \fOmega)^{-1}$.
Furthermore, the product of two lower triangular matrices
is a lower triangular matrix, therefore
$(\D - \L + \gamma \fOmega)^{-1} (\D - \L)$
is a lower triangular matrix.
Using Lemma~\ref{lem:lower}, it holds that
\begin{comment}
\begin{align}
\|(\D - \L + \gamma \fOmega)^{-1} (\D - \L)\| \nonumber
=&   \|(\D - \L + \gamma \fOmega)^{-1} (\D - \L + \gamma \fOmega - \gamma \fOmega)\| \nonumber \\
=&   \|\E_m - \gamma (\D - \L + \gamma \fOmega)^{-1} \fOmega\| \nonumber \\
\le & 1  +  \gamma \| (\D - \L + \gamma \fOmega)^{-1} \fOmega\| \nonumber \\
= & 1  +  \gamma \| diag((\D - \L + \gamma \fOmega)^{-1} \fOmega) \| \nonumber \\
= & 1  +  \gamma \| diag((\D - \L + \gamma \fOmega)^{-1}) \fOmega \| \nonumber \\
= & 1  +  \gamma \| (diag(\D - \L + \gamma \fOmega))^{-1} \fOmega \| \nonumber \\
= & 1  +  \gamma \| (\D + \gamma \fOmega)^{-1} \fOmega \|. \label{eq:uppperDLomega}
\end{align}
\end{comment}
\begin{align}
\|(\D - \L + \gamma \fOmega)^{-1} (\D - \L)\| \nonumber
=& \| diag((\D - \L + \gamma \fOmega)^{-1} (\D - \L)) \| \nonumber  \\
=& \| diag(\left(\D - \L + \gamma \fOmega\right)^{-1}) diag(\D - \L) \| \nonumber  \\
=& \| \left(diag(\D - \L + \gamma \fOmega)\right)^{-1} diag(\D - \L) \| \nonumber  \\
=& \| (diag(\D  + \gamma \fOmega))^{-1} diag(\D) \| \nonumber  \\
=& \| (\D  + \gamma \fOmega)^{-1} \D \|. \label{eq:uppperDLomega}
\end{align}
The first, second and third equalities hold due to \eqref{eq:lem-lower-1},
\eqref{eq:lem-lower-2} and \eqref{eq:lem-lower-3}, respectively.
For the fourth equality is derived from  $diag(\L) = \O$.
The last equality holds, since $\D$ and $\fOmega$ are diagonal matrices.

We define
\begin{align*}
\xi =& \|(\D - \L + \gamma \fOmega)^{-1} \U\|,
\eta = \| (\D - \L + \gamma \fOmega)^{-1}(\gamma \fOmega-\D + \L)\|,  \\
\text{and} \ \mu =& \| (\D - \L + \gamma \fOmega)^{-1} \L \|
\end{align*}
and evaluate them.
First, we use \eqref{eq:uppperDLomega} to know an upper bound on $\xi$:
\begin{align*}
\begin{array}{rcl}
\xi &=&  \|(\D - \L + \gamma \fOmega)^{-1} \U\| \\
 &=&  \|((\D - \L + \gamma \fOmega)^{-1} (\D - \L)) ((\D-\L)^{-1} \U) \| \\
 &\le&  \|(\D - \L + \gamma \fOmega)^{-1} (\D - \L)\| \| (\D - \L)^{-1} \U \| \\
 &=& \| (\D  + \gamma \fOmega)^{-1} \D \| \tau.
\end{array}
\end{align*}
Similarly, for $\eta$, we derive
\begin{align*}
\begin{array}{rcl}
\eta&=&\|(\D - \L + \gamma \fOmega)^{-1}(\gamma \fOmega - \D + \L) \| \\
&=&  \|diag((\D - \L + \gamma \fOmega)^{-1}) diag(\gamma \fOmega - \D + \L) \| \\
&=&  \|diag((\D + \gamma \fOmega)^{-1}) diag(\gamma \fOmega - \D) \| \\
&=&  \|diag((\D + \gamma \fOmega)^{-1}(\gamma \fOmega - \D)) \|
=  \|(\D + \gamma \fOmega)^{-1}(\gamma \fOmega - \D) \|.
\end{array}
\end{align*}
Finally, we can show $\mu = 0$ as follows:
\begin{align*}
\mu =& \| (\D - \L + \gamma \fOmega)^{-1} \L \|
=  \| diag((\D - \L + \gamma \fOmega)^{-1}) diag(\L) \| \\
= & \| diag((\D - \L + \gamma \fOmega)^{-1}) \| \| diag(\L) \|
= \| diag((\D - \L + \gamma \fOmega)^{-1}) \| 0 = 0.
\end{align*}
Hence, from \eqref{eq:err-rel}, it holds
that
\begin{align}
& \| \x^{k+1}-\x^* \|  \nonumber \\
= & (2 \xi + \eta) \|\x^k-\x^* \|
+ \mu (\|\x^k - \x^*\| + \|\x^{k+1} - \x^*\|) \nonumber \\
\le & ( 2  \| (\D + \gamma \fOmega)^{-1} \D \| \tau  + \|(\D + \gamma \fOmega)^{-1}(\gamma \fOmega - \D) \| ) \|\x^k-\x^* \| \nonumber \\
= & \delta \|\x^k-\x^* \|. \label{eq:delta-sequence}
\end{align}
By the assumption $\delta < 1$ and the fixed-point theorem,
we obtain the convergence of $\{\x^k\}_{k = 0}^{\infty}$
and we know that $\x^*$ is the unique point to which
the sequence $\{\x^k\}_{k = 0}^{\infty}$ converges.
This completes the proof.
\hfill \qed
\end{proof}

Let us now focus the case $\fOmega = \alpha \D$ with the parameter $\alpha > 0$.
From Theorem~\ref{thm:amgs-conv}, we can derive the following corollary.

\begin{corollary}\upshape
	\label{col:alphaD}
Suppose $\fOmega = \alpha\D$.
If $\tau <1$ and $\alpha > \frac{\tau}{\gamma}$, the iteration sequence $\{\flambda ^k\}_{k=0}^\infty\subset\mathbb R^m$ generated by Method~\ref{mtd:proposedamgs} with
an arbitrary nonnegative initial vector $\flambda^0\in\mathbb R^m$
converges to the unique solution $\flambda^*\in\mathbb R^m$ of
$LCP(\q, \A)$.
\end{corollary}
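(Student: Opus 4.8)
The plan is to invoke Theorem~\ref{thm:amgs-conv} directly, so the only work is to verify that its hypothesis $\delta < 1$ is implied by the two assumptions $\tau < 1$ and $\alpha > \tau/\gamma$ in the special case $\fOmega = \alpha \D$. First I would substitute $\fOmega = \alpha \D$ into the definition of $\delta$. Since $\D$ is a diagonal matrix with positive diagonal entries, $\D + \gamma \fOmega = (1+\gamma\alpha)\D$ is invertible, and we get the scalar-matrix identities $(\D + \gamma \fOmega)^{-1}\D = \frac{1}{1+\gamma\alpha}\E_m$ and $(\D + \gamma \fOmega)^{-1}(\gamma\fOmega - \D) = \frac{\gamma\alpha-1}{1+\gamma\alpha}\E_m$. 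Taking spectral norms, $\|(\D + \gamma \fOmega)^{-1}\D\| = \frac{1}{1+\gamma\alpha}$ and $\|(\D + \gamma \fOmega)^{-1}(\gamma\fOmega - \D)\| = \frac{|\gamma\alpha-1|}{1+\gamma\alpha}$, hence
\begin{align*}
\delta = \frac{2\tau + |\gamma\alpha - 1|}{1 + \gamma\alpha}.
\end{align*}

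Next I would show $\delta < 1$, i.e.\ $2\tau + |\gamma\alpha - 1| < 1 + \gamma\alpha$, by a case analysis on the sign of $\gamma\alpha - 1$. If $\gamma\alpha \ge 1$, the inequality reduces to $2\tau < 2$, which is exactly $\tau < 1$; moreover in this case $\tau < 1$ already forces $\alpha \ge 1/\gamma > \tau/\gamma$, so the second hypothesis is automatic. If $\gamma\alpha < 1$, the inequality reduces to $2\tau < 2\gamma\alpha$, i.e.\ $\alpha > \tau/\gamma$, which is precisely the remaining hypothesis. Thus in both cases $\delta < 1$.

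Finally, once $\delta < 1$ is in hand, Theorem~\ref{thm:amgs-conv} immediately gives that the sequence $\{\flambda^k\}_{k=0}^\infty$ generated by Method~\ref{mtd:proposedamgs} from any nonnegative initial vector converges to the unique solution $\flambda^*$ of $LCP(\q,\A)$, which is the claim. I do not expect any genuine obstacle: the argument is essentially a one-line substitution plus an elementary case split on the absolute value, the only subtlety being the remark that the condition $\alpha > \tau/\gamma$ is only binding when $\gamma\alpha < 1$.
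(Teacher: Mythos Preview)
Your proposal is correct and follows essentially the same route as the paper: substitute $\fOmega=\alpha\D$ into the expression for $\delta$ from Theorem~\ref{thm:amgs-conv}, obtain $\delta=\frac{2\tau+|\gamma\alpha-1|}{1+\gamma\alpha}$, and verify $\delta<1$ by the same case split on the sign of $\gamma\alpha-1$. The only cosmetic difference is that you explicitly note the scalar-identity forms $(\D+\gamma\fOmega)^{-1}\D=\frac{1}{1+\gamma\alpha}\E_m$ before taking norms, and you remark that the hypothesis $\alpha>\tau/\gamma$ is automatically satisfied in the case $\gamma\alpha\ge1$; the paper leaves both points implicit.
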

\begin{proof}
	We put $\fOmega = \alpha\D$ to $\delta$ of
	Theorem~\ref{thm:amgs-conv}.
	Then, it holds that
\begin{align*}
\delta = & 2  \| (\D + \gamma \fOmega)^{-1} \D \| \tau + \|(\D + \gamma \fOmega)^{-1}(\gamma \fOmega - \D) \| \\
 = & 2  \| (\D + \alpha \gamma \D)^{-1} \D \| \tau  + \|(\D + \alpha \gamma \D)^{-1}(\alpha \gamma \D - \D) \| \\
 = & \frac{2\tau}{1+ \alpha \gamma} + \frac{|\alpha \gamma -1|}{1 + \alpha \gamma}.
\end{align*}
If $\alpha \gamma \ge 1$, it holds
$\delta = \frac{2 \tau - 1  + \alpha \gamma}{1+ \alpha \gamma}
<  1$, since $\tau < 1$.
On the other hand, if $\tau < \alpha \gamma < 1$, then
$\delta = \frac{2 \tau + 1  - \alpha \gamma}{1+ \alpha \gamma}
<  1$.
Therefore, we obtain $\delta < 1$. Thus, we can employ Theorem~\ref{thm:amgs-conv}
to derive this corollary.\hfill \qed
	\end{proof}

\begin{remark}\upshape
	\label{rem:bestalpha}
    Suppose $0 < \tau < 1$.
From \eqref{eq:delta-sequence}, we can evaluate
$\|\x^k - \x^*\| \le \delta^k \|\x^0 - \x^*\|$.
	In Corollary~\ref{col:alphaD},
	$\delta = \frac{2\tau}{1+ \alpha \gamma} + \frac{|\alpha \gamma -1|}{1 + \alpha \gamma}$ attains its minimum $\delta = \tau$ at
	$\alpha \gamma = 1$, since $0 < \tau < 1$ and a function $\frac{2\tau}{1+x} + \frac{|x-1|}{1+x}$ is monotonically decreasing when $0 < x < 1$ and monotonically increasing when $x > 1$.
	Therefore, when $\tau < 1$, a favorable choice on $\alpha$
	for  the case $\fOmega =  \alpha \D$ is
	$\alpha  = \frac{1}{\gamma}$.

	\end{remark}
\begin{remark}\upshape
    \label{rem:bestomega}
    If we choose $\fOmega = \bar{\omega} \E_m$ with $\bar{\omega} > 0$ as discussed in Zheng and Yin~\cite{zheng2013accelerated},
    $\delta$ in Theorem~\ref{thm:amgs-conv} can be evaluated as follows:
    \begin{align*}
    \delta =&  \max_{i=1,\ldots,m} \frac{2 D_{ii} \tau }{D_{ii} + \gamma \bar{\omega}}
    +
    \max_{i=1,\ldots,m} \frac{ |D_{ii} - \gamma \bar{\omega}|  }{D_{ii} + \gamma \bar{\omega}} \\
    \ge&  \max_{i=1,\ldots,m} \left\{ \frac{2 D_{ii} \tau }{D_{ii} + \gamma \bar{\omega}}
    +
    \frac{ |D_{ii} - \gamma \bar{\omega}|  }{D_{ii} + \gamma \bar{\omega}}\right\} \ge \tau.
    \end{align*}
    Therefore, a lower bound of $\delta$ is $\tau$, and this lower bound is attained
    only if $\gamma \bar{\omega} = D_{ii}$ for all $i=1,\ldots,m$.
    Thus, if $\D$ is not a multiple of the identity matrix, $\delta$ of the case
     $\fOmega = \frac{1}{\gamma}\D$ is smaller than that of $\fOmega = \bar{\omega} \E_m$.
     Hence, we can expect a faster convergence in  $\fOmega = \frac{1}{\gamma}\D$.
    \end{remark}

\section{Numerical experiments}
\label{sec:numexp}
In this section, we show numerical results of several 2-dimensional examples to verify the numerical performance of the proposed method.
All tests were performed on an Windows 8 computer with Intel Core i7-5500U (2.4 GHz CPU) and 8 GB memory space.

In the numerical experiments, we utilized the warm-start strategy \cite{erleben2004stable},
that is, the final solution obtained in a simulation step will be used as the first guess of the solution in the next simulation step.
More precisely, let $\left\{\flambda^{(t),k}\right\}_{k=0}^\infty$ denote
the sequence generated for solving $LCP(\q^{(t)}, \A^{(\t)})$ at a simulation step $t$.
We set the number of iterations in a single simulation step to 10, that is,
$\flambda^{(t),10}$ is used as the first guess $\flambda^{(t+1),0}$.
The initial point of the entire simulation is set as the zero vector $(\flambda^{(0),0} = \0)$.
To evaluate the accuracy of $\flambda^{(t),k}$,
we use a residual function in \cite{bai2010modulus}:
\begin{align}
\label{eq:residual-func}
RES(\flambda^{(t),k})=\left\|\min\{\flambda^{(t),k},\A\flambda^{(t),k}+\q\}\right\|.
\end{align}
For the AMGS methods, we set $\gamma = 2$,
and $\bar{\omega}$ is chosen as $\frac{\sum_{i=1}^m D_{ii}}{m \gamma}$
(the average of $D_{11}, \ldots, D_{mm}$ divided by $\gamma$)
based on preliminary experiments.

For the numerical experiments, we use examples ``Pool 1'', ``Pool 2'' and ``Stacking''; in the first two examples, rigid circles are stuffed into a small space, while, in the last case, rigid circles are vertically stacked.

\vspace{1em}
\noindent{\textbf{Pool 1}}

Figure~\ref{fig:pool1} displays an example with 221 circles in an area of 6 meters wide.
All the circles have the same mass (2 kilograms) and the same radius (21 centimeters). The coefficients of friction are set to 0.1, and the coefficients of restitution are set to 0.2.
When the coefficient of restitution is 1, collisions are perfectly elastic (\textit{i.e.}, no energy loss), and when the coefficient of restitution is 0, collisions are perfectly inelastic.
The gravitational acceleration is set to 9.80665 $\mathrm{m/s^2}$ in the downward direction in the figure.

In Pool~1, the size $n$ in the matrices $\J\in\Real^{m\times n}$
and $\M\in\Real^{n\times n}$ is $672$,
while the size $m$ depend on the simulation step $t$,
since the number of contact points between the circles changes as the simulation proceeds. During the entire simulation, the average of $m$ is $516$.
% The value of $\tau$ is $\tau = \| (\D - \L)^{-1} \U \| = ??$.

\begin{figure}[tbp]
	\centering
	\iffigure
	\includegraphics[width=0.5\textwidth]{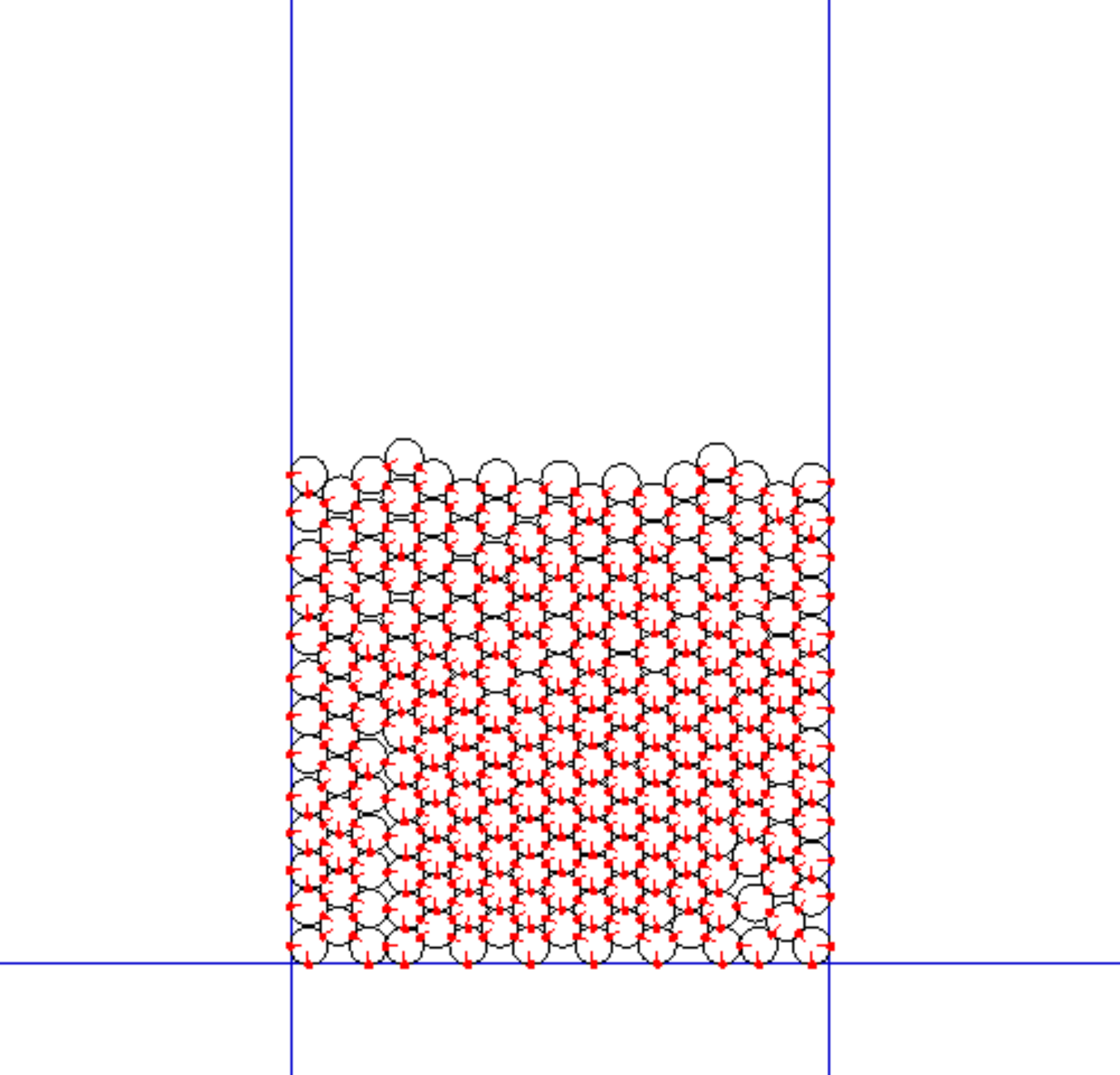}
	\fi
	\caption{A simulation of circles in Pool 1 and Pool 2.
		% with different configurations of masses.
		Small red points represent the contact points between the circles, and short red line segments the normal vectors at the contact points.}
	\label{fig:pool1}
\end{figure}

\vspace{1em}
\noindent{\textbf{Pool 2 }}

The most part of this example is same as Pool 1, but the masses of the circles increases linearly from 1.0 kilograms to 3.0 kilograms in accordance with their initial heights from the ground; circles in higher positions have larger masses than those in lower positions, and the mass of the heaviest circles (the top circles) are three times of that of the lightest circles (the bottom circles).
This is expected that the convergence will be slower, since it is hard for the lower (lighter) circles to support higher (heavier) circles.
The sizes $n$ and $m$ of the matrices are the same as Pool 1.

\vspace{1em}
\noindent{\textbf{Stacking}}

Figure~\ref{fig:stacking1} displays a simple example with 30 vertically stacked circles of 18-centimeter radius. All circles have the same masses (1.0 kilograms), and the coefficients of restitution are set to 0.2. The coefficients of friction are set to 0.1, but no frictional forces are produced because of the arrangement of the circles.
The sizes of $n$ and $m$ (average) are $48$ and $14$, respectively.
%The value of $\tau$ is $\tau = \| (\D - \L)^{-1} \U \| = ??$.

\begin{figure}[tbp]
	\centering
	\iffigure
	\includegraphics[width=0.5\textwidth]{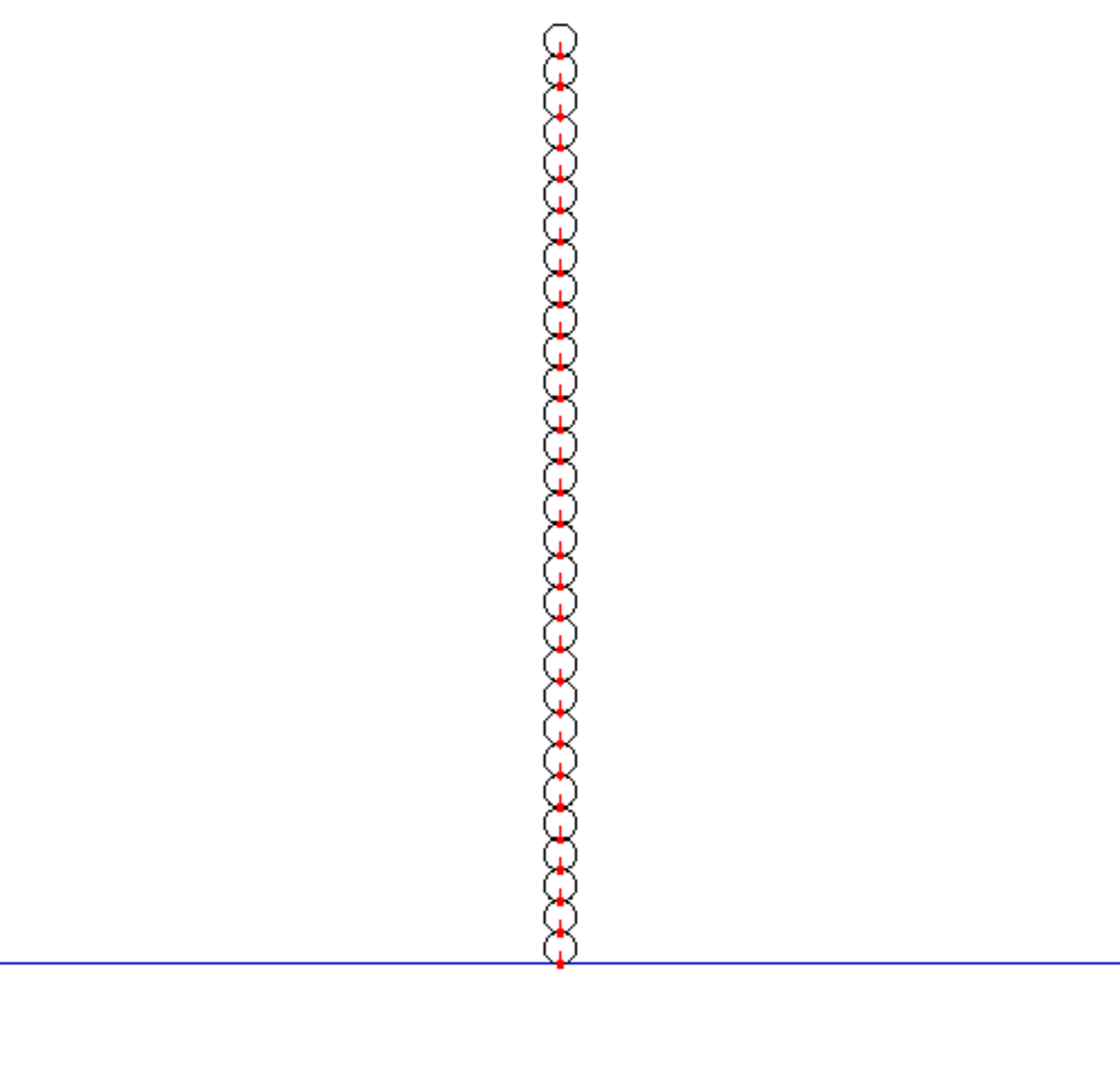}
	\fi
	\vspace{-0.4cm}
	\caption{A simulation of vertically stacked circles. Small red points represent the contact points of the circles, and short red line segments  the normal vectors at the contact points.}
	\label{fig:stacking1}
\end{figure}

\subsection{Convergence in each simulation step}
\label{sec:conv-each-step}
In each simulation step $t$, we execute only 10 iterations and we move to the next simulation step $t+1$ with the first guess $\flambda^{(t+1),0} = \flambda^{(t),10}$.
In this subsection, we execute more iterations to compare the convergence of the PGS method
and the proposed AMGS methods in each simulation step.
% In particular, Method~\ref{mtd:amgs} is a standard AMGS method, and Method~\ref{mtd:proposedamgs} is the proposed AMGS method that utilizes the structure of $\J$ and $\M$.
The average values of $\bar{\omega}$ in the numerical experiments are 1.928, 1.086, and 3.872 in Pool 1, Pool 2 and Stacking, respectively.

For Pool~1, Figure~\ref{fig:res1} plots $RES(\flambda^{(100),k})$ for $k=1,\ldots, 200$
of the PGS method, Method~\ref{mtd:proposedamgs} with $\fOmega = \bar{\omega} \E_m$
 (shortly, Method~\ref{mtd:proposedamgs}[$\bar{\omega} \E_m$])
and Method~\ref{mtd:proposedamgs} with $\fOmega = \frac{1}{\gamma} \D$ (shortly, Method~\ref{mtd:proposedamgs}[$\frac{1}{\gamma} \D$]).
The horizontal axis is the iteration number $k$ of $RES(\flambda^{(100),k})$.
In a similar way, Figures~\ref{fig:res2} and \ref{fig:res3} show
$RES(\flambda^{(100),k})$ of Pool~2 and Stacking, respectively.
We chose the 100th simulation step,
the early steps contained a lot of noise
and the warm-start strategy did not work effectively there.

\begin{figure}[tbp]
	\centering
	\iffigure
	\includegraphics[width=0.75\textwidth]{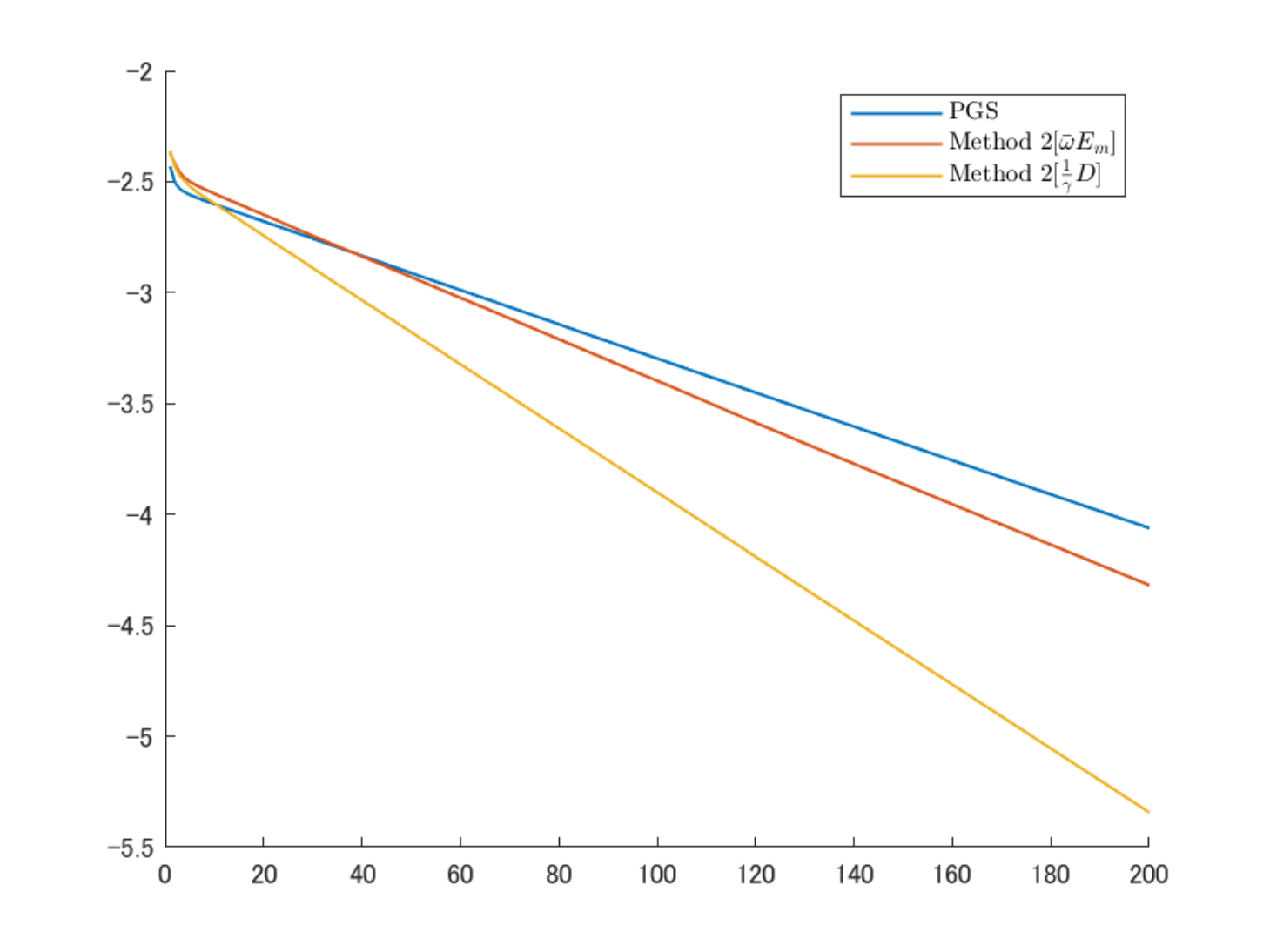}
	\fi
	\vspace{-0.4cm}
	\caption{Residuals of the sequence generated in the 100th simulation step for Pool 1.}
	\label{fig:res1}
\end{figure}
\begin{figure}[tbp]
	\centering
	\iffigure
	\includegraphics[width=0.75\textwidth]{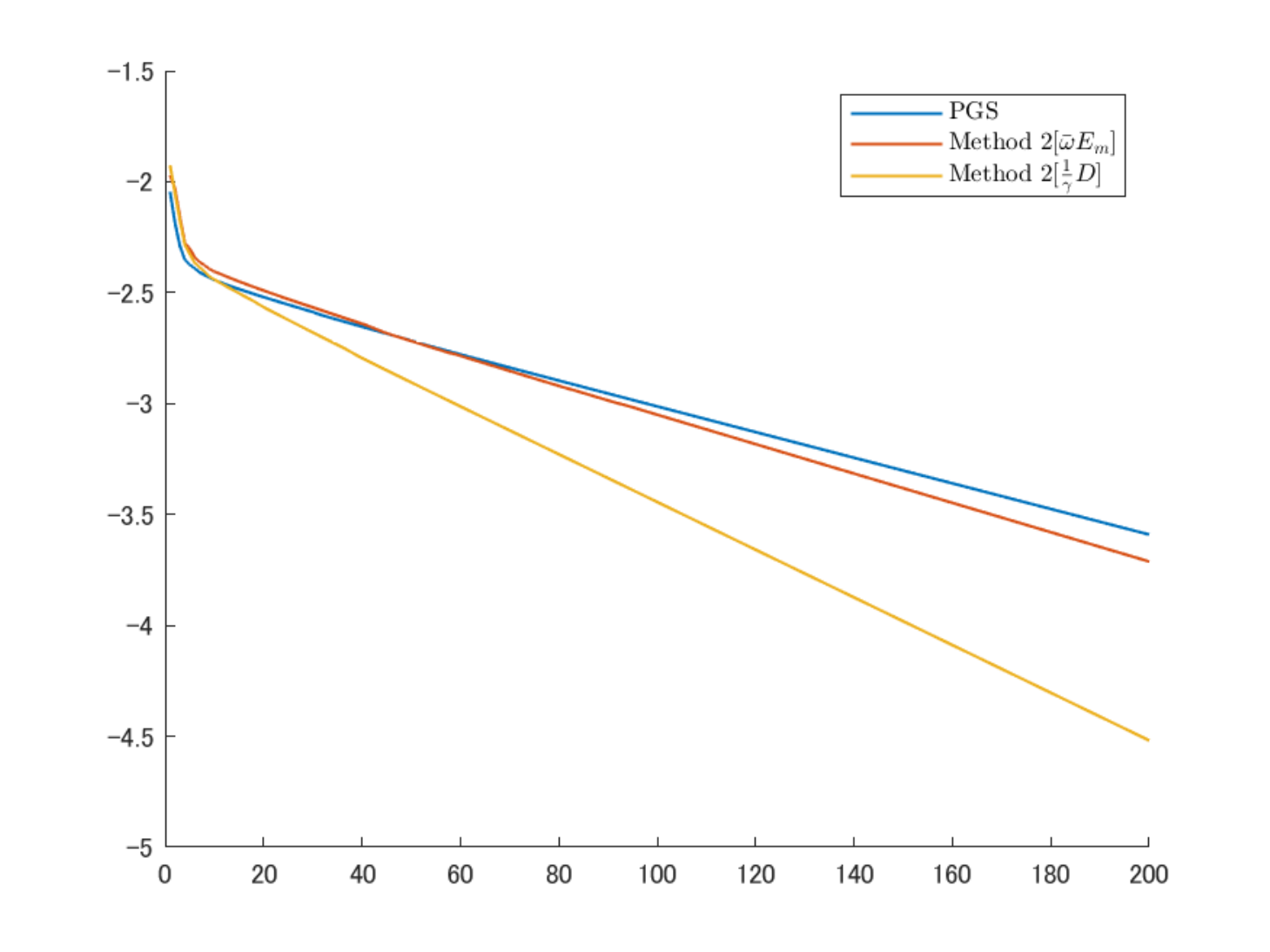}
	\fi
	\vspace{-0.4cm}
	\caption{Residuals of the sequence generated in the 100th simulation step for Pool 2.}
	\label{fig:res2}
\end{figure}
\begin{figure}[tbp]
	\centering
	\iffigure
	\includegraphics[width=0.75\textwidth]{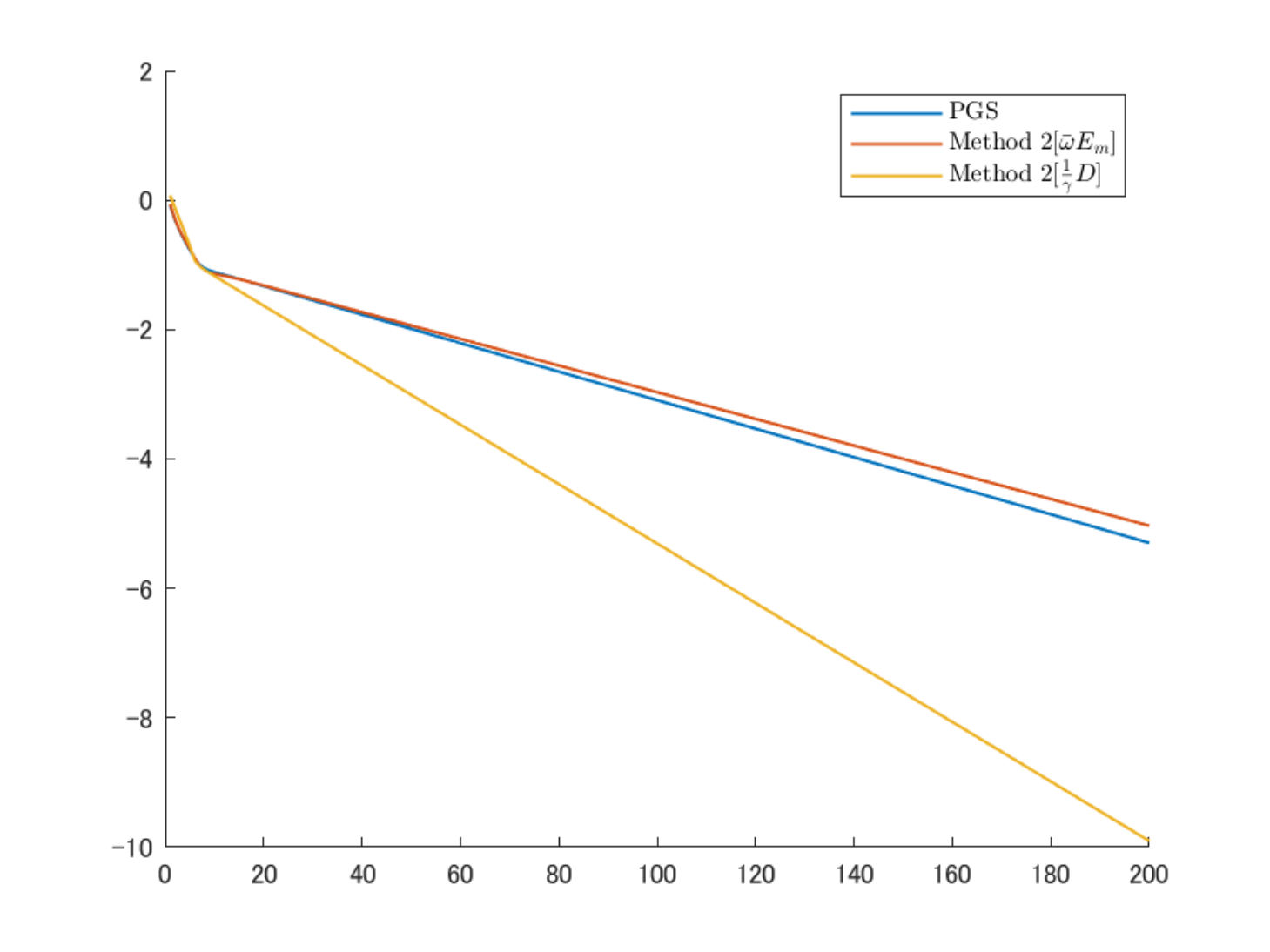}
	\fi
	\vspace{-0.4cm}
	\caption{Residuals of the sequence generated in the 100th simulation step for Stacking.}
	\label{fig:res3}
\end{figure}

From Figures~\ref{fig:res1}, \ref{fig:res2}, and \ref{fig:res3}, we observe that Method~\ref{mtd:proposedamgs} attains better convergence than the PGS method in most cases.

Table~\ref{table:comptime} reports the iteration number $k$ and the computation time in seconds of each method
to reach $RES(\flambda^{(100),k}) < 10^{-4}$.
\begin{comment}
Note that since the standard AMGS method cannot directly handle contacts with frictions, the numerical experiments on Method~\ref{mtd:amgs} was executed without frictions, and this change somewhat affects on convergence rates and thus on the numbers of iterations and computation times on Table~\ref{table:comptime} (we marked them with ``*'').
\end{comment}
Since the standard AMGS method cannot directly handle contacts with frictions, coefficients of friction are set to 0 only for this experiment. Also, the computation time for the 30 circles in Stacking (Figure~\ref{fig:stacking1}) was too short to measure (shorter than $0.001$ seconds), therefore we used 150 circles instead of 30 circles.
Method~\ref{mtd:amgs} computes the coefficient matrix $\A$ explicitly, and we observe that Method~\ref{mtd:amgs} is the slowest in Table~\ref{table:comptime}.
This computation time is insufficient for real-time simulations.

In the comparison between Method~\ref{mtd:proposedamgs}[$\bar{\omega} \E_m$] and Method~\ref{mtd:proposedamgs}[$\frac{1}{\gamma} \D$],
we can see that Method~\ref{mtd:proposedamgs}[$\frac{1}{\gamma} \D$] achieves
better convergences than Method~\ref{mtd:proposedamgs}[$\bar{\omega} \E_m$],
and this result is consistent with Remarks~\ref{rem:bestalpha} and \ref{rem:bestomega}. Furthermore, Method~\ref{mtd:proposedamgs}[$\frac{1}{\gamma} \D$] achieves the smallest number of iterations and computation times in all three cases.

\begin{table}[tbp]
	\caption{The iteration number and the computation time to reach $10^{-4}$.}
	\label{table:comptime}
	\centering
	\begin{tabular}{lrr}
		\hline
\multicolumn{3}{c}{Pool 1} \\
\hline
& iteration & time (seconds) \\
\hline
PGS method	& 1452 & 0.042 \\
Method~\ref{mtd:amgs}[$\frac{1}{\gamma} \D$] & 766 & 1.251 \\
Method~\ref{mtd:proposedamgs}[$\bar{\omega} \E_m$]
& 1522 & 0.055 \\
Method~\ref{mtd:proposedamgs}[$\frac{1}{\gamma} \D$]
& 766 & 0.023  \\
\hline
		\hline
\multicolumn{3}{c}{Pool 2} \\
\hline
& iteration & time (seconds) \\
\hline
PGS method	& 1359 & 0.042 \\
Method~\ref{mtd:amgs}[$\frac{1}{\gamma} \D$] & 744 & 1.428 \\
Method~\ref{mtd:proposedamgs}[$\bar{\omega} \E_m$]
& 1393 & 0.048 \\
Method~\ref{mtd:proposedamgs}[$\frac{1}{\gamma} \D$]
& 744 & 0.028  \\
\hline
		\hline
\multicolumn{3}{c}{Stacking} \\
\hline
& iteration & time (seconds) \\
\hline
PGS method	& 16186 & 0.162 \\
Method~\ref{mtd:amgs}[$\frac{1}{\gamma} \D$] & 7754 & 0.293 \\
Method~\ref{mtd:proposedamgs}[$\bar{\omega} \E_m$]
& 16431 & 0.183 \\
Method~\ref{mtd:proposedamgs}[$\frac{1}{\gamma} \D$]
& 7754 & 0.066  \\
\hline
\end{tabular}
\end{table}

\subsection{Convergence in entire simulation}
\label{sec:conv-entire}

In this subsection, we report the computation errors $RES(\flambda^{(t), 10})$
along with the progress of simulation steps $t \ge 60$.
We removed the first 60 steps from the figure,
since the early steps contained a lot of noise.
In the previous subsection, we observed
that Method~\ref{mtd:proposedamgs}[$\frac{1}{\gamma} \D$]
is superior to
Method~\ref{mtd:amgs} and
Method~\ref{mtd:proposedamgs}[$\bar{\omega} \E_m$]
in each simulation step. Thus, we use only
Method~\ref{mtd:proposedamgs}[$\alpha \D$]
in this subsection, changing the value of $\alpha$.

In Figure~\ref{fig:pool1res}, the horizontal axis is the simulation step $t$,
and the vertical axis is the residual $RES(\flambda^{(t), 10})$.
From Figure~\ref{fig:pool1res}, we observe that Method~\ref{mtd:proposedamgs} converges faster than the PGS method for $t \ge 300$.
Among different values of $\alpha$, $\alpha=0.2$ shows the fastest convergence in the figure. In Remark~\ref{rem:bestalpha}, we analyzed that $\alpha = \frac{1}{\gamma}$ (here, we used $\gamma = 2$,  so $\frac{1}{\gamma} = 0.5$)
leads to the smallest $\delta$ such that $||\x^k - \x^*||\le \delta^k ||\x^0 - \x^*||$. However, this is only an upper bound of $||\x^k - \x^*||$,
and Figure~\ref{fig:pool1res} implies that a faster convergence is possible by choosing a smaller $\alpha$.
\begin{comment}
However, from Figure~\ref{fig:pool1sres}, we also know that when using the AMSOR method, setting $\alpha$ smaller than 0.3 makes the simulation unstable, resulting in considerably large residues throughout the simulation. In this case, the convergence is the fastest when $\alpha=0.4$. Note that we run ten AMGS or AMSOR iterations in each simulation step, and the values of $RES(\lambda^k)$ in each simulation step are evaluated after the computation of the AMGS or AMSOR iterations.
\end{comment}

\begin{figure}[tb]
	\centering
\iffigure
	\includegraphics[width=0.75\textwidth]{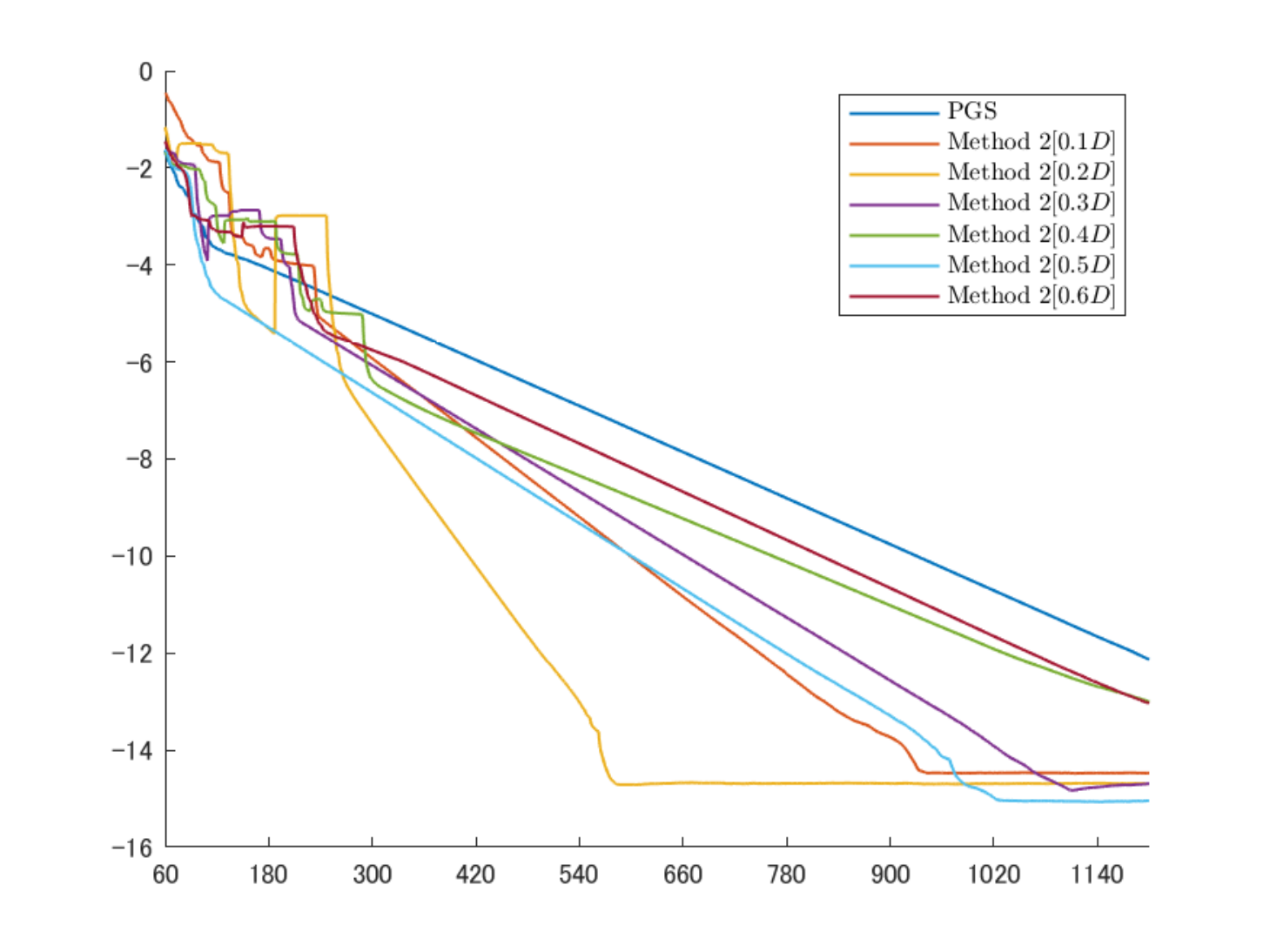}
\fi
	\caption{The residuals in the entire simulation of Pool 1.}
	\label{fig:pool1res}
\end{figure}

\begin{comment}
\begin{figure}[tb]
	\centering
\iffigure
	\includegraphics[width=0.75\textwidth]{pool1sres.pdf}
\fi
	\caption{The plot of residures in Pool 1 with the AMSOR method and the PSOR method. The horizontal and the vertical axes are the same as Figure \ref{fig:pool1res}. AMSOR($\alpha$) indicates the result of the AMSOR method with the parameter $\alpha$.}
	\label{fig:pool1sres}
\end{figure}
\end{comment}

The result of Pool 2 illustrated in Figure~\ref{fig:pool2res} indicates that there are no clear differences of the convergence speed between the PGS method and the AMGS method with various values of $\alpha$, but when $\alpha=0.1$, the simulation is unstable during about the first 100 simulation steps.
%  From Figure~\ref{fig:pool2sres}, we can see that setting $\alpha$ of the AMSOR method smaller than 0.3 makes the simulation unstable like as in Figure~\ref{fig:pool1sres}, although $\alpha=0.3$ finally reaches the fastest convergence.

\begin{figure}[tbp]
	\centering
\iffigure
	\includegraphics[width=0.75\textwidth]{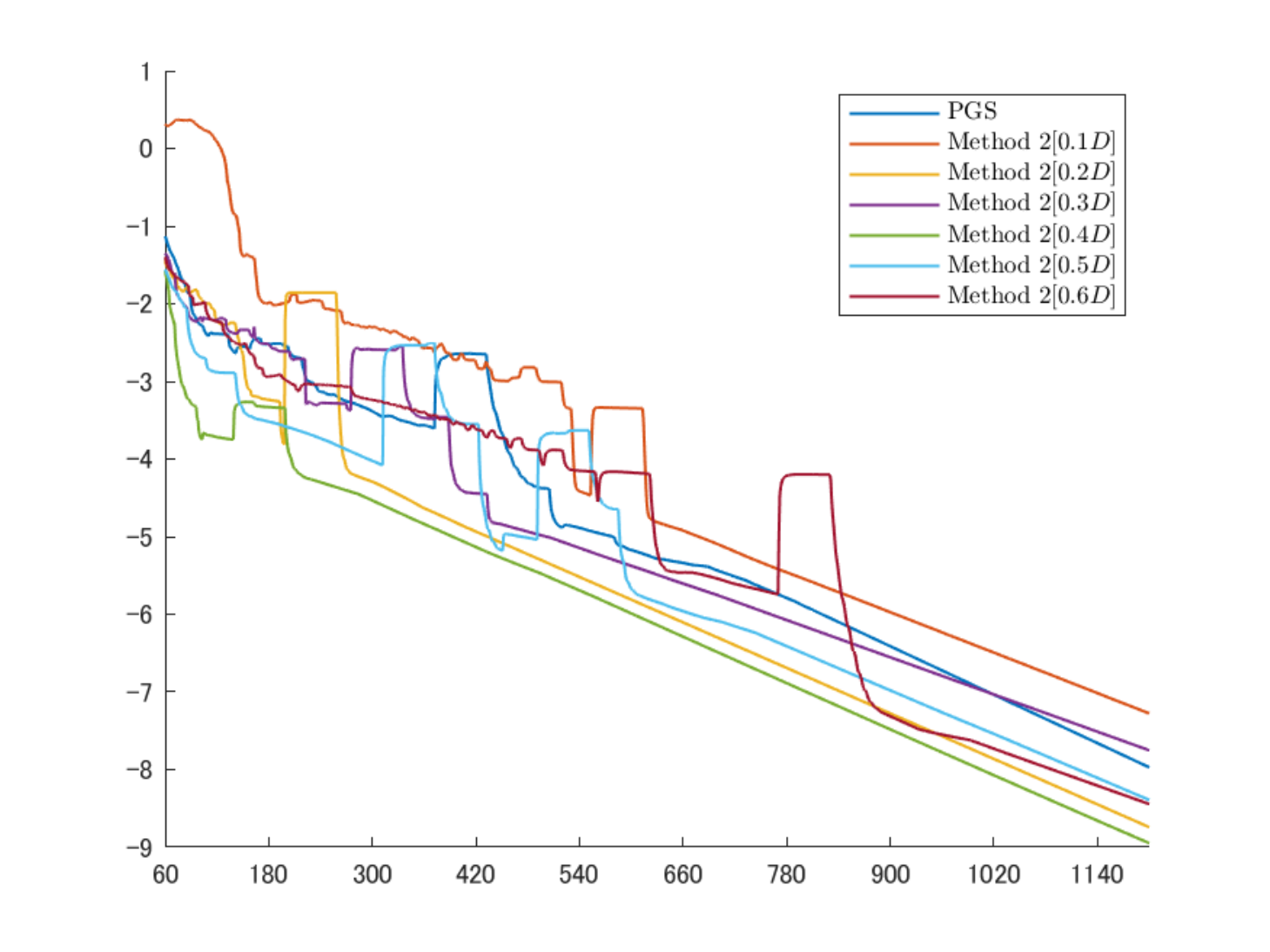}
\fi
	\caption{The residuals in the entire simulation of Pool 2.}
	\label{fig:pool2res}
\end{figure}

\begin{comment}
\begin{figure}[tbp]
	\centering
\iffigure
	\includegraphics[width=0.75\textwidth]{pool2sres.pdf}
\fi
	\caption{The result of Pool 2 with the AMSOR method and the PSOR method. The horizontal and the vertical axes are the same as Figure \ref{fig:pool1res}.}
	\label{fig:pool2sres}
\end{figure}
\end{comment}

From Figure~\ref{fig:stacking1res} for Stacking, in a similar way to Pool 1, we can again observe that the AMGS method with the smaller $\alpha$ gives the faster convergence, and the AMGS method with $\alpha=0.6$ still converges faster than the PGS method.

Finally, Table~\ref{table:comptimeentire} shows the entire computation time for 1,000 simulation steps, with 200 iterations for each step, that is,
    we computed $\flambda^{(1000),200}$.
The entire computation time is still shorter in the proposed method than in the PGS method.
As mentioned in Table~\ref{table:comptime}, the convergence rate is better in the proposed method (Method~\ref{mtd:proposedamgs}[$\alpha \D$]) than in the PGS method, in other words, the proposed method simulates the circles more accurately than the PGS method. Therefore, the proposed method has the advantages for interactive rigid-body simulations.
\begin{table}[tbp]
    \caption{The computation time for 1,000 simulation steps.}
    \label{table:comptimeentire}
    \centering
    \begin{tabular}{lr}
        \hline
\multicolumn{2}{c}{Pool 1} \\
\hline
&  time (seconds) \\
\hline
PGS method	&  2.639 \\
Method~\ref{mtd:proposedamgs}[$0.1 \D$] & 2.083 \\
Method~\ref{mtd:proposedamgs}[$0.2 \D$] & 1.974 \\
Method~\ref{mtd:proposedamgs}[$0.3 \D$] & 2.051 \\
Method~\ref{mtd:proposedamgs}[$0.4 \D$] & 1.996 \\
Method~\ref{mtd:proposedamgs}[$0.5 \D$] & 1.968 \\
Method~\ref{mtd:proposedamgs}[$0.6 \D$] & 2.022 \\
\hline
        \hline
\multicolumn{2}{c}{Pool 2} \\
\hline
&  time (seconds) \\
\hline
PGS method	&  2.093 \\
Method~\ref{mtd:proposedamgs}[$0.1 \D$] & 1.921 \\
Method~\ref{mtd:proposedamgs}[$0.2 \D$] & 1.907 \\
Method~\ref{mtd:proposedamgs}[$0.3 \D$] & 1.849 \\
Method~\ref{mtd:proposedamgs}[$0.4 \D$] & 1.904 \\
Method~\ref{mtd:proposedamgs}[$0.5 \D$] & 1.915 \\
Method~\ref{mtd:proposedamgs}[$0.6 \D$] & 1.915 \\
\hline
        \hline
\multicolumn{2}{c}{Stacking} \\
\hline
&  time (seconds) \\
\hline
PGS method	&  0.095 \\
Method~\ref{mtd:proposedamgs}[$0.1 \D$] & 0.099 \\
Method~\ref{mtd:proposedamgs}[$0.2 \D$] & 0.081 \\
Method~\ref{mtd:proposedamgs}[$0.3 \D$] & 0.085 \\
Method~\ref{mtd:proposedamgs}[$0.4 \D$] & 0.085 \\
Method~\ref{mtd:proposedamgs}[$0.5 \D$] & 0.089 \\
Method~\ref{mtd:proposedamgs}[$0.6 \D$] & 0.082 \\
\hline
    \end{tabular}
\end{table}

% From Figure~\ref{fig:stacking1sres} however, we also know that the AMSOR method with $\alpha=0.1$ or $\alpha=0.2$ gives unstable results throughout the simulations even with this simple configuration.
% In realistic problems, it is not expected that the smaller $\alpha$ always gives the better result, therefore $\alpha$ should be determined carefully so as not to make the simulation unstable. The value of $\alpha$ that gives the best result is highly data dependent and difficult to be determined, but setting $\alpha$ around 0.4 tends to give stable results and sufficiently high convergence rates.

\begin{figure}[tbp]
	\centering
\iffigure
	\includegraphics[width=0.75\textwidth]{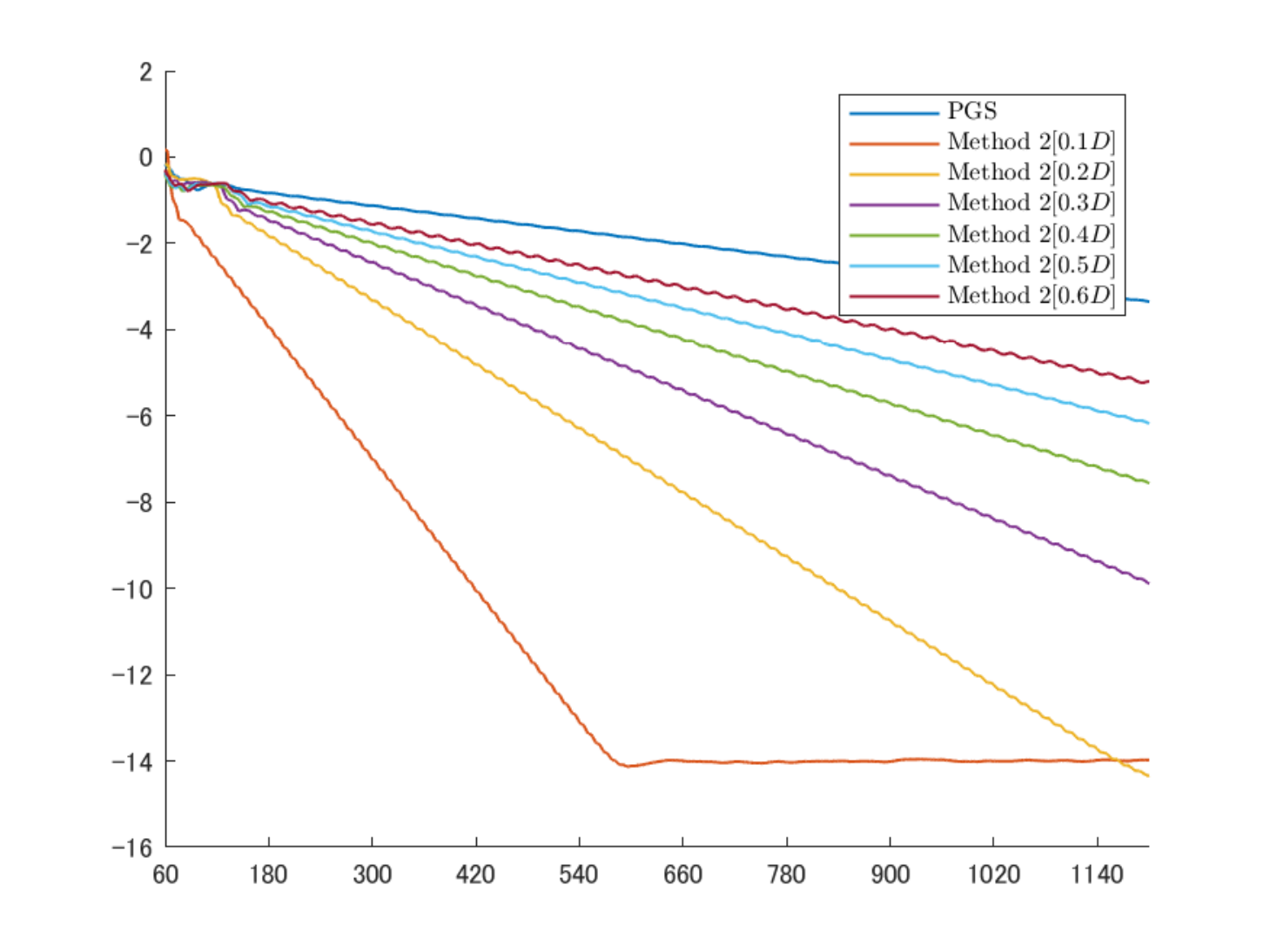}
\fi
	\caption{The residuals in the entire simulation of Stacking.}
	\label{fig:stacking1res}
\end{figure}

\begin{comment}
\begin{figure}[tbp]
	\centering
\iffigure
	\includegraphics[width=0.75\textwidth]{stacking1sres.pdf}
\fi
	\caption{Numerical result on Stacking 1. The horizontal and the vertical axes are the same as Figure \ref{fig:pool1res}. AMSOR($\alpha$) indicates the result of the AMSOR method with the parameter $\alpha$.}
	\label{fig:stacking1sres}
\end{figure}
\end{comment}

\section{Conclusion}\label{sec:conclusion}
We presented a numerical method based on the AMGS method for interactive rigid-body simulations exploiting the sparse structure of the data matrices.
We established the convergence theorem of the AMGS method for the case the matrix $\A$ is positive definite and $\fOmega=\alpha \D$ with $\alpha>0$.
This case was examined in  the numerical experiments, and we
observed that the proposed method attained the better accuracy than the PGS method and the computation time of the proposed method was shorter than that of
a simple application of the AMGS method.

In practical cases, however, determining a proper value of $\alpha$ is not simple.
As discussed in Remark~\ref{rem:bestalpha}, we should choose
$\alpha = \frac{1}{\gamma}$ to minimize $\delta$ in Theorem~\ref{thm:amgs-conv}.
However, this $\delta$ is just a theoretical upper bound.
Actually, the numerical results showed that a smaller value of $\alpha$ gave
a better convergence.
 An approach that adaptively determines the value of $\alpha$ may resolve this problem, and we leave a discussion on such an approach as a future task of this paper.
Further numerical experiments in 3-dimensional spaces that take frictions into consideration will be another topic of our future studies.

%\begin{acknowledgements}
%If you'd like to thank anyone, place your comments here
%and remove the percent signs.
%\end{acknowledgements}

% Authors must disclose all relationships or interests that
% could have direct or potential influence or impart bias on
% the work:
%
% \section*{Conflict of interest}
%
% The authors declare that they have no conflict of interest.

% Non-BibTeX users please use
%\begin{thebibliography}{}
%
% and use \bibitem to create references. Consult the Instructions
% for authors for reference list style.
%
%\bibitem{RefJ}
% Format for Journal Reference
%Author, Article title, Journal, Volume, page numbers (year)
% Format for books
%\bibitem{RefB}
%Author, Book title, page numbers. Publisher, place (year)
% etc
%\end{thebibliography}

\end{document}
% end of file template.tex